\def\algspacing{\alg@unmargin}
\newlength{\algorithmwidth}
\theoremstyle{plain}
\newtheorem{theorem}{Theorem}[section]
\newtheorem{proposition}[theorem]{Proposition}
\newtheorem{corollary}[theorem]{Corollary}
\theoremstyle{definition}
\newtheorem{definition}[theorem]{Definition}
\theoremstyle{remark}
\numberwithin{theorem}{section}
\numberwithin{equation}{section}
\newcommand{\defby}{\overset{\mathrm{\scriptscriptstyle{def}}}{=}}
\def \E {\mathbb{E}}
\def \eps {\varepsilon}
\newcommand{\vct}[1]{\bm{#1}}
\newcommand{\mtx}[1]{\bm{#1}}
\newcommand{\norm}[1]{\lVert {#1} \rVert}
\newcommand{\normsq}[1]{\norm{#1}^2}
\newcommand{\enorm}[1]{\norm{#1}_2}
\newcommand{\enormsq}[1]{\enorm{#1}^2}
\newcommand{\fnorm}[1]{\norm{#1}_{\rm F}}
\newcommand{\fnormsq}[1]{\fnorm{#1}^2}
\newcommand{\Expect}{\E}
\newcommand{\pinv}{\dagger}
\newcommand{\cnst}[1]{\mathrm{#1}}
\newcommand{\Id}{\mathbf{I}}
\begin{document}

\title{Block Kaczmarz Method with Inequalities}
\author{J. Briskman and D. Needell}
\date{\today}

\begin{abstract}
The randomized Kaczmarz method is an iterative algorithm that solves systems of linear equations. Recently, the randomized method was extended to systems of equalities and inequalities by Leventhal and Lewis. Even more recently, Needell and Tropp provided an analysis of a block version of this randomized method for systems of linear equations.  This paper considers the use of a block type method for systems of mixed equalities and inequalities, bridging these two bodies of work.  We show that utilizing a matrix paving over the equalities of the system can lead to significantly improved convergence, and prove a linear convergence rate as in the standard block method.  We also demonstrate that using blocks of inequalities offers similar improvement only when the system satisfies a certain geometric property.  We support the theoretical analysis with several experimental results.
\end{abstract}
 \maketitle

\section{Introduction}

The Kaczmarz method~\cite{Kac37:Angenaeherte-Aufloesung} is an iterative algorithm for solving linear systems of equations.  It is usually applied to large-scale overdetermined systems because of its simplicity and speed (but also converges in the underdetermined case to the least-norm solution under appropriate initial conditions).
Each iteration projects onto the solution space corresponding to one row in the system, in a sequential fashion.  Strohmer and Vershynin prove that when the rows are selected from a certain random distribution rather than sequentially, that the randomized method converges to the solution at a linear rate~\cite{SV09:Randomized-Kaczmarz}.  %
The method has been applied to fields including image reconstruction, digital signal processing, and computer 
tomography~\cite{SS87:Image-Recovery,CFMSS92:POCS-Variants,Nat01:Mathematics-Computerized,RefWorks:504}. Leventhal and Lewis modify the randomized Kaczmarz method 
 to apply to systems of linear 
equalities and inequalities~\cite{LL10:Randomized-Methods}, thereby extending results on the standard method in this setting (see e.g.~\cite{censor1981} and references therein).  Unlike the traditional randomized algorithm which enforces a single constraint at each iteration, the block Kaczmarz 
approach recently analyzed by Needell and Tropp~\cite{NT12:Block-Kaczmarz} enforces multiple constraints simultaneously and thus offers computational advantages. 
Here we demonstrate convergence for a 
system of linear equalities and inequalities by combining a randomized block Kaczmarz method for the equalities with a randomized Kaczmarz 
algorithm for the inequalities. These results indicate that the block Kaczmarz method can be used for a system of equalities and inequalities, 
and in some cases may quicken convergence.  We also consider the case of utilizing blocking in both the equalities and inequalities, although this
can be detrimental unless the geometry of the system meets certain conditions.

\subsection{Model and Notation} \label{sec:assumptions}

We consider a linear system 
\begin{equation} \label{eqn:lin-system}
\mtx{A} \vct{x} = \vct{b},
\end{equation}
where $\mtx{A}$ is a real (or complex) $n \times d$ matrix, typically with $n\gg d$. 

The $\ell_p$ vector norm for $p \in [1, \infty]$ is denoted $\norm{\cdot}_p$, while $\norm{\cdot}$ is the spectral norm and $\fnorm{\cdot}$ refers to the Frobenius norm. 
For an $n \times d$ matrix $\mtx{A}$, the singular values are arranged in decreasing order and we write
$$
\sigma_{\max}(\mtx{A}) \defby \sigma_1(\mtx{A}) \geq \sigma_2(\mtx{A})
	\geq \dots \geq \sigma_{d}(\mtx{A}) \defby \sigma_{\min}(\mtx{A}).
$$
We define the eigenvalues $\lambda_{\min}(\mtx{A}), \ldots, \lambda_{\max}(\mtx{A})$ of a matrix analogously.
For convenience we will assume that each row $\vct{a_i} $ of $\mtx{A}$ has unit norm, 
$\enorm{ \vct{a_i} } = 1$, and we call such matrices \textit{standardized}.

We define the usual condition number $$\kappa(\mtx{A}) \defby \sigma_{\max}(\mtx{A})/\sigma_{\min}(\mtx{A}),$$ and write
the Moore-Penrose pseudoinverse of matrix $\mtx{A}$ by $\mtx{A}^\dagger$. 
Recall that for a matrix $\mtx{A}$ with full row rank, the pseudoinverse is obtained by $\mtx{A}^\dagger \defby \mtx{A}^{*}(\mtx{A}\mtx{A}^{*})^{-1}$.

Now we consider a system of linear equalities and inequalities and denote by $S$ its non-empty set of feasible solutions. 
We thus consider the matrix $\mtx{A}$ whose rows can be arranged such that
\begin{equation} \label{eqn:ineq-matrix}
\mtx{A} = \left[
	\begin{array}{lr} 
		\mtx{A}_{=}\\
		\mtx{A}_{\leq} 
	\end{array}
\right],
\end{equation}
and we will write $I_{=}$ and $I_{\leq}$ to denote the row indices of $\mtx{A}_{=}$ and $\mtx{A}_{\leq}$, respectively.
Therefore, we ask that
\begin{equation} \label{eqn:ineq-system}
\langle \vct{a}_{i}, \vct{x} \rangle \leq \vct{b}_{i} \quad(i \in I_{\leq})\quad\text{and}\quad
\langle \vct{a}_{i}, \vct{x} \rangle = \vct{b}_{i} \quad(i \in I_{=})
\end{equation}
We will assume that the set of rows $\{1,2,...,n\}$ is partitioned such that the first $n_{e}$ rows correspond to equalities, and the remaining $n_i = n-n_{e}$ rows 
to inequalities.  Thus $\mtx{A}_=$ is an $n_e\times d$ matrix and $\mtx{A}_{\leq}$ is $n_i \times d$.

The error bound for this system of linear inequalities uses the function $e: \textbf{R}^n \rightarrow \textbf{R}^n$ defined as in~\cite{LL10:Randomized-Methods} by
 
$$
  e(y)_{i} = \left\{
     \begin{array}{lr}
       y_{i}^{+} & \text{for } i \in {I_{\leq}}\\
       y_{i} & \text{for } i \in {I_{=}}
     \end{array}
   \right.
$$
where the positive part is defined as ${x^{+}} \defby \max(x,0)$.

\subsection{Details of Kaczmarz}
The simple Kaczmarz method is an iterative algorithm that approximates a least-squares minimizer $\vct{x}_{\star}$ to the problem in~\eqref{eqn:lin-system}. 
It takes an arbitrary initial approximation $\vct{x}_{0}$, and at each iteration $j$ the current iterate is projected orthogonally onto the solution 
hyperplane $\{\left\langle \vct{a}_{i}, \vct{x} \right\rangle = \vct{b}_{i}\}$, using the update rule
\begin{equation} \label{eqn:simp-rule}
\vct{x}_{j+1} = \vct{x}_{j} + \frac{\vct{b}_{i} - \left\langle \vct{a}_{i}, \vct{x}_{j} \right\rangle }{\ \enormsq{\vct{a}_{i}}} \vct{a}_{i}
\end{equation}
where $i = j$ mod $n+1$ \cite{Kac37:Angenaeherte-Aufloesung}.  With an unfortunate ordering of the rows, this method as-is can produce very slow convergence.
However, it has been well known that using randomized selection often eliminates this effect \cite{HS78:Angles-Null,HM93:Algebraic-Reconstruction}. 
The randomized Kaczmarz method put forth by Strohmer and Vershynin \cite{SV09:Randomized-Kaczmarz} uses a random selection method for the selection of row $i$ 
such that each row $i$ is selected with probability proportional to $\enormsq{\vct{a}_{i}}$.  This randomization provides an algorithm that is both 
simple to analyze and enforce in many cases.   In this paper we assume each row has unit norm, 
so each row is selected uniformly at random from $\{$1,...,n$\}$ in the simple randomized Kaczmarz approach.\footnote{This assumption is both for notational convenience, and because the use of matrix pavings discussed below only hold for standardized matrices.  In practice, one can employ pre-conditioning on non-standardized systems, or extend the construction of matrix pavings to non-standardized systems~\cite{RefWorks:Versh}.}  Strohmer and Vershynin prove a linear rate of 
convergence for consistent systems that depends on the scaled condition number of $\mtx{A}$, and not on the number of equations $n$ \cite{SV09:Randomized-Kaczmarz}, 
\begin{equation}\label{rv}
\Expect \enormsq{\vct{x}_{j} -\vct{x}_{\star}} \leq \left[1 - \frac{1}{K}\right]^j \enormsq{\vct{x}_{0} - \vct{x}},
\end{equation}
where $\vct{x}_{\star}$ is the solution to the consistent system~\eqref{eqn:lin-system} and $K = \|\mtx{A}\|_F^2 / \sigma^2_{\min}(\mtx{A})$ denotes the scaled condition number.  
Needell extended this work to the inconsistent case and proves linear convergence to the least-squares solution within some fixed radius \cite{Nee10:Randomized-Kaczmarz},
$$
\Expect \enormsq{\vct{x}_{j} -\vct{x}_{\star}} \leq \left[1 - \frac{1}{K}\right]^j \enormsq{\vct{x}_{0} - \vct{x}} + K\|\vct{e}\|_{\infty}^2,
$$
where $\vct{e} = \mtx{Ax_{\star}} - \vct{b}$ denotes the residual vector.  Because the Kaczmarz method projects directly onto each solution hyperplane,
such a convergence radius is unavoidable without adding a relaxation parameter.

The randomized Kaczmarz method can be adapted to the case of a linear system of equalities and inequalities described in~\eqref{eqn:ineq-system}. 
Leventhal and Lewis \cite{LL10:Randomized-Methods} apply the Kaczmarz method to a consistent system of linear equalities and inequalities (here
consistent simply means the feasible set $S$ is non-empty). At each iteration $j$, the previous iterate only projects onto the solution hyperplane 
if the inequality is not already satisfied. If the inequality is satisfied  for row $i$ selected at iteration $j$ $(\vct{a}_{i}^T \vct{x} \leq \vct{b}_{i})$, 
the approximation $\vct{x}_{j}$ is set as $\vct{x}_{j-1}$ \cite{LL10:Randomized-Methods}. The update rule for this algorithm is thus
\begin{equation} \label{eqn:ineq-rule}
\vct{x}_{j+1} = \vct{x}_{j} - \frac{e(\vct{a}_{i}^{T}\vct{x}_{j} - \vct{b}_{i})}{\ \enormsq{\vct{a}_{i}}} \vct{a}_{i}.
\end{equation}

This algorithm converges linearly in expectation \cite{LL10:Randomized-Methods}, with
$$
\Expect \left[ d(\vct{x}_{j},S)^2 \;|\; \vct{x}_{j-1} \right] \leq d(\vct{x}_{j-1},S)^2 - \frac{\enormsq{e(\mtx{A}\vct{x}_{j-1}-\vct{b})}}{\ \normsq{\mtx{A}}_{F}}.
$$

In order to bound the right hand side of this expression, the authors rely on a lemma due to Hoffman \cite{Hof52:Approximate-Solutions,LL10:Randomized-Methods}.  This result 
states that for any system~\eqref{eqn:ineq-system} with non-empty solution set $S$, there exists a constant $L$ independent of $\vct{b}$ such that for all $\vct{x}$,
\begin{equation}\label{hoffman}
d(\vct{x}, S) \leq L\enorm{e(\vct{Ax}-\vct{b})}.
\end{equation}
When $\mtx{A}_= = \mtx{A}$ is full column rank, the Hoffman constant is the inverse of the smallest singular value, $L = \sigma_{\min}^{-1}(\mtx{A})$.

Using this their result becomes
\begin{equation} \label{eqn:ineq-convergence}
\Expect \left[d(\vct{x}_j,S)^2\right]
	\ \leq \ \left[ 1 - \frac{1}{L^2\|\mtx{A}\|_F^2} \right]^j\cdot
	d(\vct{x}_{0},S)^2,
\end{equation}
which coincides with~\eqref{rv} for consistent systems of equalities.

\subsection{Block Kaczmarz}
A block variant of the randomized Kaczmarz method due to Elfving \cite{Elf80:Block-Iterative-Methods} has been recently analyzed by Needell and Tropp \cite{NT12:Block-Kaczmarz} and can improve the convergence rate in 
certain cases. The block Kaczmarz 
method first partitions the rows $\{1,...,n\}$ into $m$ blocks, denoted $\tau_{1}, \ldots \tau_m$. Instead of selecting one row per iteration as done with the simple 
Kaczmarz method, the block Kaczmarz algorithm chooses a block uniformly at random at each iteration. Thus the block Kaczmarz method enforces multiple constraints 
simultaneously. At each iteration, the previous iterate $\vct{x}_{j-1}$ is projected onto the solution space to $\mtx{A}_{\tau}\vct{x} = \vct{b}_{\tau}$, which enforces 
the set of equations in block $\tau$ \cite{NT12:Block-Kaczmarz}. $\mtx{A}_{\tau}$ and $\vct{b}_{\tau}$ are written as the row submatrix of $\mtx{A}$ and the subvector 
of $\vct{b}$ indexed by $\tau$ respectively, yielding an iterative rule of
\begin{equation} \label{eqn:block-algorithm}
\vct{x}_{j} = \vct{x}_{j-1} + (\mtx{A}_{\tau})^\dagger(\vct{b}_{\tau} - \mtx{A}_{\tau} \vct{x}_{j-1}).
\end{equation}
The pseudoinverse used in~\eqref{eqn:block-algorithm} returns the solution to the underdetermined least squares problem for a wide or square row submatrix $\mtx{A}_{\tau}$.%

Depending on the characteristics of the submatrix $\mtx{A}_{\tau}$, the block method can provide better convergence than the 
simple method.  
If we assume that the submatrices $\mtx{A}_{\tau}$ are well conditioned, the additional cost of computing their pseudo-inverse can be overcome by the gain in utilizing block multiplications (see our experiments in Section~\ref{sec:exps}).  In fact, if the blocks admit a fast multiply (for example if the matrix is built of DFT or circulant blocks), then 
the computational cost of the block iteration~\eqref{eqn:block-algorithm} is similar to the cost of the simple update 
rule in~\eqref{eqn:simp-rule}. 
Since the convergence depends heavily on the conditioning of each submatrix, one seeks partitions of the rows into
blocks for which each block is well-conditioned.  The notion of a \textit{row-paving} allows one to do precisely that.

\begin{definition}
We define an ($m$, $\beta$) row paving\footnote{The standard definition of a row paving also includes a constant $\alpha$ which serves as a lower
bound to the smallest singular value.  We ignore that parameter here since it will not be utilized.} of matrix $\mtx{A}$ as a partition 
$ T=\{\tau_1,...\tau_m\}$ of the row indices such that 
$$
\lambda_{\max}( \mtx{A}_{\tau} \mtx{A}_{\tau}^* ) \leq \beta
\quad\text{for each $\tau \in T$.}
$$
\end{definition}
The \textit{size} of the paving, or number of blocks, is $m$.  The value of $\beta$ is the upper paving bound, which 
controls the spectral norms of the submatrices. 
Needell and Tropp \cite{NT12:Block-Kaczmarz} show that these parameters determine the performance of the algorithm, 
with convergence for a consistent system admitting an $(m, \beta)$ paving given by
\begin{equation} \label{eqn:block-convergence}
\Expect \enormsq{ \vct{x}_j - \vct{x}_{\star} }
	\ \leq \ \left[ 1 - \frac{\sigma_{\min}^2(\mtx{A})}{\beta m} \right]^{j}
	\enormsq{ \vct{x}_0 - \vct{x}_{\star} }.
\end{equation}
Therefore the convergence rate depends on the size $m$ and upper bound $\beta$; the algorithm's performance improves with low values of $m$ and $\beta$, 
and large $\sigma_{\min}^2(\mtx{A})$.  The authors also prove convergence for inconsistent systems, with the same convergence rate and convergence radius
which depends also on the minimum of all $\lambda_{\min}( \mtx{A}_{\tau} \mtx{A}_{\tau}^* )$, see \cite{NT12:Block-Kaczmarz} for details.

Surprisingly, every standardized matrix admits a good row paving.  The following result is due to \cite{RefWorks:485,Tro09:Column-Subset} which builds off the
foundational work of \cite{BT87:Invertibility-Large,RefWorks:546}.
\begin{proposition}[Existence of Good Row Pavings] \label{prop:intro-paving}
For any $\delta \in (0, 1)$ and standardized $n \times d$ matrix $\mtx{A}$, there is a row paving satisfying
$$
m \leq \cnst{C} \cdot \delta^{-2} \normsq{\mtx{A}} \log(1+n)
\quad\text{and}\quad
1 - \delta \leq \beta \leq 1 + \delta.
$$
where $\cnst{C}$ is an absolute constant.%
\end{proposition}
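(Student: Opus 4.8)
The strategy is the probabilistic method: I will place a natural distribution on partitions of the row indices and show that, for the stated value of $m$, a random partition makes every block well conditioned with positive probability. Two preliminary reductions guide the argument. First, because $\mtx{A}_\tau \mtx{A}_\tau^*$ is the Gram matrix of the rows indexed by $\tau$, it has unit diagonal, whence $\lambda_{\max}(\mtx{A}_\tau \mtx{A}_\tau^*) \ge 1$ for every nonempty block; the lower estimate $\beta \ge 1 - \delta$ is therefore automatic, and the whole task is to force $\lambda_{\max}(\mtx{A}_\tau \mtx{A}_\tau^*) \le 1 + \delta$ for all $\tau$ simultaneously. Second, writing $\mtx{H} \defby \mtx{A}\mtx{A}^* - \Id$ for the hollow (zero-diagonal) Gram matrix, the identity $\mtx{A}_\tau \mtx{A}_\tau^* = \Id + \mtx{H}_{\tau}$, where $\mtx{H}_\tau$ is the principal submatrix of $\mtx{H}$ on $\tau$, gives $\lambda_{\max}(\mtx{A}_\tau \mtx{A}_\tau^*) = 1 + \lambda_{\max}(\mtx{H}_\tau)$. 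Thus it suffices to exhibit a partition with $\norm{\mtx{H}_\tau} \le \delta$ for every block, i.e.\ to \emph{pave} the hollow matrix $\mtx{H}$ into blocks of small norm, where one should note that $\norm{\mtx{H}} \le \normsq{\mtx{A}}$.

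Next I would introduce the randomness. Assign each index independently and uniformly to one of the $m$ blocks, and for a fixed block let $\xi_i \in \{0,1\}$ indicate whether $i$ is assigned to it, so the $\xi_i$ are i.i.d.\ Bernoulli with mean $1/m$. Viewed as an operator on $\C^n$, the relevant submatrix is $\mtx{H}_\tau = \sum_{i \ne j} \xi_i \xi_j\, H_{ij}\, \vct{e}_i \vct{e}_j^*$, with $\E \mtx{H}_\tau = m^{-2}\mtx{H}$. The essential difficulty is already visible here: $\mtx{H}_\tau$ is a \emph{quadratic} function of the selectors $\xi_i$, so it is not a sum of independent matrices and no direct Chernoff or Bernstein bound applies.

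The heart of the proof is to linearize and then concentrate. I would decouple the quadratic form, replacing one factor $\xi_j$ by an independent copy so that after conditioning the expression becomes linear in a single family of selectors, and then symmetrize to introduce Rademacher signs. At that point the operator norm of the now-linear selector sum can be estimated by a noncommutative concentration inequality --- the noncommutative Khintchine inequality, or a matrix Bernstein inequality --- whose matrix-variance parameter is governed by $\norm{\mtx{H}} \le \normsq{\mtx{A}}$, and whose dimensional dependence, together with the union bound over the $m \le n$ blocks, accounts for the factor $\log(1+n)$ (the matrices act on $\C^n$). Carrying the estimate through shows that a single block satisfies $\norm{\mtx{H}_\tau} \le \delta$ except with small probability, provided $m \gtrsim \delta^{-2}\normsq{\mtx{A}}\log(1+n)$; the exponent $-2$ on $\delta$ is forced because the norm enters the Khintchine bound under a square root. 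A union bound over the blocks then leaves positive probability that all are simultaneously good, and existence follows.

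I expect the decoupling-and-concentration step to be the main obstacle, precisely because the quantity controlling a block is quadratic rather than linear in the random assignment; this is the technical core isolated by Bourgain and Tzafriri and refined, with the sharp logarithmic and $\delta^{-2}$ dependence, in the works cited. One routine caveat completes the plan: when $\delta$ is so small (or $\normsq{\mtx{A}}$ so large) that the bound would force $m \ge n$, one simply uses the singleton paving, each of whose blocks trivially satisfies $\beta = 1 \le 1 + \delta$.
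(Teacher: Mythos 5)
You should first know that the paper itself does not prove Proposition~\ref{prop:intro-paving}; it imports it from Vershynin and Tropp (building on Bourgain--Tzafriri), so your attempt must be measured against the arguments in those references. Your two preliminary reductions are correct and are indeed how the literature frames the problem: unit rows force $\lambda_{\max}(\mtx{A}_\tau\mtx{A}_\tau^*)\ge 1$, so only the upper bound is at issue, and writing $\mtx{A}_\tau\mtx{A}_\tau^*=\Id+\mtx{H}_\tau$ with $\mtx{H}\defby \mtx{A}\mtx{A}^*-\Id$ reduces everything to paving the hollow Gram matrix with $\norm{\mtx{H}_\tau}\le\delta$. Your closing caveat about singleton pavings is also fine.

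The genuine gap is the engine you propose: assigning rows to $m$ blocks i.i.d.\ uniformly and union-bounding over blocks fails for general standardized matrices. Concretely, let the rows consist of $n/2$ mutually orthogonal duplicated pairs, $\vct{a}_{2k-1}=\vct{a}_{2k}=\vct{e}_k$. Then $\normsq{\mtx{A}}=2$, so the proposition only permits $m\le 2C\delta^{-2}\log(1+n)$ blocks, yet any block containing both members of a pair has $\lambda_{\max}(\mtx{A}_\tau\mtx{A}_\tau^*)\ge 2>1+\delta$. Under your random partition the pairs collide independently with probability $1/m$ each, so the chance that no pair collides is $(1-1/m)^{n/2}\approx \exp(-n/2m)\to 0$: the random partition is bad with probability tending to one, even though an excellent paving exists (two blocks, one row from each pair). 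The technical symptom is that the decoupled Khintchine/Bernstein estimate for a randomly restricted principal submatrix contains, besides the $\norm{\mtx{H}}$-driven term you describe, a term governed by the coherence $\max_{i\ne j}\abs{\langle \vct{a}_i,\vct{a}_j\rangle}$ that does not shrink as $m$ grows; this is exactly why random partitions are known to pave only incoherent (uniformly bounded-entry) matrices --- Tropp's random paving theorem, which this paper itself mentions only as a remark about ``certain classes of matrices.'' The cited general result is proved with a different architecture: a subset-selection theorem (Bourgain--Tzafriri, Vershynin) extracts \emph{one} good block of size proportional to $\delta^{2}n'/\normsq{\mtx{A}}$ from any $n'$ not-yet-classified rows, and one iterates on the remainder; the geometric shrinkage of the leftover set is what produces the $\log(1+n)$ factor in $m$, rather than the union bound or the dimensional factor in noncommutative Khintchine as you suggest. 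Without replacing your one-shot random partition by such an iterative extraction (or adding an incoherence hypothesis), the proof cannot be completed.
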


Although this is an existential result, there are constructive methods to obtain such pavings, and for certain classes of matrices, 
they can even be obtained by a random partitioning of the rows \cite{RefWorks:556,RefWorks:536,NT12:Block-Kaczmarz}.

With such a paving in tow, the convergence of~\eqref{eqn:block-convergence} becomes
$$
\Expect \normsq{ \vct{x}_j - \vct{x}_{\star} }
	\leq \left[ 1 - \frac{1}{\cnst{C} \kappa^2(\mtx{A}) \log(1+n)} \right]^j
	\enormsq{ \vct{x}_0 - \vct{x}_\star }
	\
$$

Although often comparable to the convergence rate for the simple method~\eqref{rv}, numerical results confirm
that the block method offers significant reduction in computation time due to the speed of matrix--vector multiplication (see e.g. \cite{NT12:Block-Kaczmarz}).

\subsection{Contribution}

This paper analyzes the system with matrix described in~\eqref{eqn:ineq-matrix} using an algorithm with the block Kaczmarz 
approach for the equalities given by $\mtx{A}_{=}$ and the simple method for the inequalities given by $\mtx{A}_{\leq}$. 
A paving is created for $\mtx{A}_{=}$, with the inequalities excluded. At each iteration, we select from $\mtx{A}_{=}$ with a fixed probability
$p$ and from $\mtx{A}_\leq$ with probability $1-p$.  In the former case, we select a block $\tau$ from paving $T$ uniformly at random, 
and in the latter case we select a row $i$ of $\mtx{A}_\leq$ uniformly at random. 
In the case of a block of equalities being selected, the algorithm proceeds by updating $\vct{x}_{j}$ using~\eqref{eqn:block-algorithm}. 
When an inequality row is selected, $\vct{x}_{j}$ is updated using the rule~\eqref{eqn:ineq-rule}.  We prove that this method yields
linear convergence to the solution set $S$.  We also include a discussion about paving both $\mtx{A}_=$ and $\mtx{A}_\leq$,
which identifies a geometric property of the system which allows for
improved convergence by utilizing two pavings.  We show that when this property is not satisfied, utilizing both pavings can be detrimental to convergence.

\subsection{Organization}
Section~\ref{sec:main} lays out our main result, Theorem~\ref{thm:convergence}, and provides a proof. 
We discuss blocking the full matrix in Section~\ref{sec:double} and
Section~\ref{sec:exps} explains numerical experiments and results.  We conclude with discussion and related work in Section~\ref{sec:end}.

\section{Analysis of the Block Kaczmarz Algorithm for a System of Inequalities}\label{sec:main}

In this section we analyze the convergence of the described method, which is detailed in Algorithm~\ref{alg}.

\begin{center}
\begin{algorithm}[hb]
\caption{Block Kaczmarz Method for a System of Inequalities}
	\label{alg}
\begin{center} \fbox{
\begin{minipage}{.95\textwidth} 
\vspace{4pt}
\alginout{\begin{itemize}
\item	Matrix $\mtx{A}$ with dimension $n \times d$
\item	Right-hand side $\vct{b}$ with dimension $n$
\item Number of rows representing equalities, $n_{e}$, and inequalities, $n_i = n - n_e$
\item	Partition $T = \{\tau_1, \dots, \tau_m\}$ of the row indices $\{1, \dots, n_{e}\}$ and paving constant $\beta$
\item	Initial iterate $\vct{x}_0$ with dimension $d$
\item	Convergence tolerance $\varepsilon > 0$
\end{itemize}}
{An estimate $\hat{\vct{x}}$ to the solution of the system~\eqref{eqn:ineq-system}}
\vspace{8pt}\hrule\vspace{8pt}

\begin{algtab*}
$j \leftarrow 0$

\algrepeat 
	$j \leftarrow j + 1$ \\
	Draw uniformly at random $q$ from $[0,1]$ \\
	\algif{$q \leq \frac{\beta m}{n_{i} + \beta m}$}
		Choose a block $\tau$ uniformly at random from $T$ \\
		$\vct{x}_j \leftarrow \vct{x}_{j-1} + (\mtx{A}_\tau)^\pinv (\vct{b}_\tau - \mtx{A}_\tau\vct{x}_{j-1})$ \\
	\algelse
		Choose a row $i$ uniformly at random from $\{n_{e} + 1, \dots, n\}$ \\
		$\vct{x}_{j} \leftarrow \vct{x}_{j-1} - \frac{e(\vct{a}_{i}^{T}\vct{x}_{j-1} - \vct{b}_{i})}{\ \enormsq{\vct{a}_{i}}} \vct{a}_{i}$ \\
	\algend
\alguntil{$\enormsq{ e(\mtx{A}\vct{x}_j - \vct{b}) } \leq \eps^2$}
$\hat{\vct{x}} \leftarrow \vct{x}_j$
\end{algtab*}
\end{minipage}}
\end{center}

\end{algorithm}
\end{center}

Notice that the probability of selecting a block of $\mtx{A}_=$ is $\frac{\beta m}{n_{i} + \beta m}$.  This quantity corresponds to the relative
size of $A_{=}$ in the system, where the size is measured in terms of the paving quantities $\beta m$.  This value may be difficult to compute
precisely, and the simpler threshold of $n_e/n$ appears to also work well in practice.  
We provide no evidence that our selection of this threshold is most efficient, nor any more efficient than using one proportional 
to the number of equality rows $n_{e}$.
We find that this algorithm yields linear convergence in expectation with a rate that only depends on the number of inequalities $n_{i}$, paving size $m$, 
and upper bound $\beta$.

Our main result is described in Theorem~\ref{thm:convergence}.  

\begin{theorem}[Convergence] \label{thm:convergence}
Let the standardized matrix $\mtx{A}\in\mathbb{R}^{n \times d}$ and $b\in\mathbb{R}^n$ correspond to a system as in~\eqref{eqn:ineq-matrix} with the first $n_{e}$ rows being equalities and the 
remaining $n_i = n - n_{e}$ rows being inequalities.  Let $T$ be an $(m, \beta)$ row paving of $\mtx{A}_{=}$.   Let $\vct{x}_0$ be an arbitrary
 initial estimate and $S$ the non-empty feasible region. Then Algorithm~\ref{alg} 
satisfies for each iteration  $j$ = 1,2,3,...,
\begin{align*}
\Expect \left[d(\vct{x}_j,S)^2\right]
	\ \leq \ \left( 1 - \frac{1}{L^2(n_{i} + \beta m)} \right)^j\cdot
	d(\vct{x}_{0},S)^2,
\end{align*}
where $L$ is the Hoffman constant~\eqref{hoffman}.
\end{theorem}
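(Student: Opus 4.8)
The plan is to establish a one-step conditional contraction and then iterate. Fix an iterate $\vct{x}_{j-1}$ and let $\vct{x}_\star$ denote its nearest point in the feasible region $S$, so that $\enormsq{\vct{x}_{j-1} - \vct{x}_\star} = d(\vct{x}_{j-1},S)^2$ and, crucially, $\vct{x}_\star$ satisfies \emph{both} $\mtx{A}_=\vct{x}_\star = \vct{b}_=$ and all of the inequalities. Since $d(\vct{x}_j,S)^2 \leq \enormsq{\vct{x}_j - \vct{x}_\star}$ for any realization of the update, I would bound $\Expect[\enormsq{\vct{x}_j - \vct{x}_\star} \mid \vct{x}_{j-1}]$ in each of the two branches and then average them using the branch probabilities $\tfrac{\beta m}{n_i + \beta m}$ and $\tfrac{n_i}{n_i + \beta m}$.

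For the equality branch, a block $\tau$ is drawn and $\vct{x}_j$ is the orthogonal projection of $\vct{x}_{j-1}$ onto the affine space $\{\vct{x}: \mtx{A}_\tau \vct{x} = \vct{b}_\tau\}$, which contains $\vct{x}_\star$. The displacement $\vct{x}_j - \vct{x}_{j-1} = \mtx{A}_\tau^\pinv(\vct{b}_\tau - \mtx{A}_\tau\vct{x}_{j-1})$ lies in $\range(\mtx{A}_\tau^*)$ while $\vct{x}_j - \vct{x}_\star$ lies in $\Null(\mtx{A}_\tau)$, so a Pythagorean identity gives $\enormsq{\vct{x}_j - \vct{x}_\star} = \enormsq{\vct{x}_{j-1} - \vct{x}_\star} - \enormsq{\mtx{A}_\tau^\pinv(\vct{b}_\tau - \mtx{A}_\tau \vct{x}_{j-1})}$. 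The residual $\vct{b}_\tau - \mtx{A}_\tau\vct{x}_{j-1} = \mtx{A}_\tau(\vct{x}_\star - \vct{x}_{j-1})$ lies in $\range(\mtx{A}_\tau)$, and the paving bound $\lambda_{\max}(\mtx{A}_\tau\mtx{A}_\tau^*)\le\beta$ yields $\enormsq{\mtx{A}_\tau^\pinv \vct{r}} \ge \beta^{-1}\enormsq{\vct{r}}$ for such $\vct{r}$. Averaging the resulting decrement $\beta^{-1}\enormsq{\mtx{A}_\tau\vct{x}_{j-1} - \vct{b}_\tau}$ uniformly over the $m$ blocks, and using that $T$ partitions the rows of $\mtx{A}_=$, collapses the sum to $\tfrac{1}{\beta m}\enormsq{\mtx{A}_=\vct{x}_{j-1} - \vct{b}_=}$.

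For the inequality branch a single unit-norm row $i$ is drawn and the Leventhal--Lewis update \eqref{eqn:ineq-rule} is applied. I would expand $\enormsq{\vct{x}_j - \vct{x}_\star}$ and use that $\vct{a}_i^T\vct{x}_\star \le \vct{b}_i$ to force the cross term to have the favorable sign, giving the one-step bound $\enormsq{\vct{x}_j - \vct{x}_\star} \le \enormsq{\vct{x}_{j-1} - \vct{x}_\star} - e(\vct{a}_i^T\vct{x}_{j-1} - \vct{b}_i)^2$ (trivially true when the constraint is already satisfied, since then $\vct{x}_j = \vct{x}_{j-1}$ and $e = 0$). Averaging uniformly over the $n_i$ inequality rows gives the decrement $\tfrac{1}{n_i}\enormsq{e(\mtx{A}_\leq\vct{x}_{j-1} - \vct{b}_\leq)}$.

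It then remains to combine the two branches. The branch probability $\tfrac{\beta m}{n_i + \beta m}$ is engineered precisely so that $\tfrac{\beta m}{n_i + \beta m}\cdot\tfrac{1}{\beta m}$ and $\tfrac{n_i}{n_i+\beta m}\cdot\tfrac{1}{n_i}$ both equal $\tfrac{1}{n_i + \beta m}$; the two decrements then add to $\tfrac{1}{n_i + \beta m}\big(\enormsq{\mtx{A}_=\vct{x}_{j-1}-\vct{b}_=} + \enormsq{e(\mtx{A}_\leq\vct{x}_{j-1}-\vct{b}_\leq)}\big)$, which is exactly $\tfrac{1}{n_i+\beta m}\enormsq{e(\mtx{A}\vct{x}_{j-1}-\vct{b})}$ since $e$ acts as the identity on equality coordinates. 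Applying the Hoffman bound \eqref{hoffman} in the form $\enormsq{e(\mtx{A}\vct{x}_{j-1}-\vct{b})} \ge L^{-2} d(\vct{x}_{j-1},S)^2$ produces the conditional contraction factor $1 - \tfrac{1}{L^2(n_i+\beta m)}$, and taking total expectations and iterating over $j$ finishes the proof. I expect the main obstacle to be the equality branch: turning the paving bound into a clean \emph{lower} bound on the step length requires the observation that the residual lies in $\range(\mtx{A}_\tau)$, so that $\sigma_{\max}^{-2}(\mtx{A}_\tau) \ge \beta^{-1}$ is the relevant amplification, together with a careful verification of the orthogonal (Pythagorean) decomposition for the pseudoinverse update rather than for a plain single-hyperplane projection.
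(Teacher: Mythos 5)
Your proposal is correct and follows essentially the same route as the paper's proof: condition on the branch, compare to $P_S\vct{x}_{j-1}$ (your $\vct{x}_\star$), obtain the per-branch decrements $\tfrac{1}{\beta m}\enormsq{\mtx{A}_=\vct{x}_{j-1}-\vct{b}_=}$ and $\tfrac{1}{n_i}\enormsq{e(\mtx{A}_\leq\vct{x}_{j-1}-\vct{b}_\leq)}$, combine via the engineered branch probabilities, and finish with the Hoffman bound and iteration. Your Pythagorean decomposition along $\range(\mtx{A}_\tau^*)$ and $\Null(\mtx{A}_\tau)$ is just a rephrasing of the paper's use of the orthogonal projector $\mtx{A}_\tau^\pinv\mtx{A}_\tau$ together with the bound $\enormsq{\mtx{A}_\tau^\pinv\mtx{A}_\tau\vct{v}} \geq \beta^{-1}\enormsq{\mtx{A}_\tau\vct{v}}$, so the two arguments coincide.
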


\begin{remarks}\\
\noindent{\bfseries 1. }Note that when there are no block projections, no inequalities, or neither, Theorem~\ref{thm:convergence} recovers
the results of the standard randomized Kaczmarz for inequalities \cite{LL10:Randomized-Methods}, the standard randomized block Kaczmarz method \cite{NT12:Block-Kaczmarz} or the standard 
randomized Kaczmarz method \cite{SV09:Randomized-Kaczmarz}, respectively.  We thus view this result as a completely generalized convergence bound.

\noindent{\bfseries 2. } If we let $\rho_s$ and $\rho_b$ be the convergence rates of the simple and block methods for mixed systems, respectively, then 
by~\eqref{eqn:ineq-convergence} and Theorem~\ref{thm:convergence},
$$
\rho_{{s}} \geq \frac{1}{L^2 n} \quad\text{ and }\quad \rho_{{b}} \geq \frac{1}{L^2( n_{i} + \beta m)}.
$$ 
It is evident that our expected convergence rate will be faster per iteration than the simple method when $n_{i} + \beta m < n$.  Since $\beta$ can be 
chosen close to $1$ and $m<n_e$ is then number of rows in $\mtx{A}_=$, this holds quite easily.

\noindent{\bfseries 3. }Since a single iteration using a block $\mtx{A}_\tau$ in general may cost more than an iteration utilizing a single row,
it is more fair to compare per epoch, rather than per iteration.  An epoch is typically the minimum number of iterations needed to visit each row of the
matrix.  When there are inequalities present that are already satisfied in a given iteration, that iteration may make no contribution and cost
very little computationally.  Thus the notion of epoch may be slightly skewed here, but if we ignore this subtlety
the simple method will have approximately $n$ 
iterations per epoch, compared to $n_{i} + m$ iterations per epoch with the block method. The approximate per epoch convergence rates can thus be compared as
$$
n \cdot \rho_{{s}} \geq \frac{1}{L^2} \quad\text{ and }\quad (n_{i} + m) \cdot \rho_{{b}} \geq \frac{n_{i} + m}{L^2( n_{i} + \beta m)}.
$$ 
This result is similar to that found by Needell and Tropp \cite{NT12:Block-Kaczmarz}, with the block convergence rate at best equal 
to that of the simple convergence rate when $\beta = 1$.  However, as already noted, the block method is quite advantageous computationally. 
\end{remarks}

Combining the paving result of Prop.~\ref{prop:intro-paving} with Theorem~\ref{thm:convergence} yields the following corollary.

\begin{corollary}\label{cor1}
Instate the assumptions and notation of Theorem~\ref{thm:convergence} and let $\mtx{A}_=$ be equipped with an $(m, \beta)$ row-paving as in Proposition~\ref{prop:intro-paving}.
Then the iterates of Algorithm~\ref{alg} satisfy
\begin{align*}
\Expect \left[d(\vct{x}_j,S)^2\right]
	\ \leq \ \gamma^j\cdot
	d(\vct{x}_{0},S)^2,
\end{align*}
where $\gamma = \left( 1 - \frac{1}{L^2(n_{i} + C\|\mtx{A}_=\|^2\log(1+n))} \right)$ and $C$ is some absolute constant.
\end{corollary}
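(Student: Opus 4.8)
The plan is to derive the corollary directly from Theorem~\ref{thm:convergence} by substituting the paving bound furnished by Proposition~\ref{prop:intro-paving}, so that the whole argument reduces to controlling the quantity $\beta m$ and invoking monotonicity of the per-iteration contraction factor.

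First I would note that because $\mtx{A}$ is standardized, its row submatrix $\mtx{A}_=$ is standardized as well (each of its $n_e$ rows has unit norm), so Proposition~\ref{prop:intro-paving} applies to $\mtx{A}_=$ with row count $n_e$. Fixing a convenient value of the slack parameter, say $\delta = 1/2$, the proposition produces an $(m,\beta)$ row paving of $\mtx{A}_=$ with $m \leq \cnst{C}\,\delta^{-2}\normsq{\mtx{A}_=}\log(1+n_e)$ and $\beta \leq 1+\delta$. Multiplying these two estimates gives
$$
\beta m \;\leq\; (1+\delta)\,\cnst{C}\,\delta^{-2}\normsq{\mtx{A}_=}\log(1+n_e)
\;=\; 6\,\cnst{C}\,\normsq{\mtx{A}_=}\log(1+n_e).
$$
Since $n_e \leq n$ we have $\log(1+n_e)\leq\log(1+n)$, and absorbing the numerical factor $6\cnst{C}$ into a single absolute constant (again denoted $\cnst{C}$) yields $\beta m \leq \cnst{C}\normsq{\mtx{A}_=}\log(1+n)$.

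Next I would feed this estimate into the rate from Theorem~\ref{thm:convergence}. The factor $1 - \tfrac{1}{L^2(n_i+\beta m)}$ is increasing in $\beta m$, so replacing $\beta m$ by its upper bound only enlarges it:
$$
1 - \frac{1}{L^2(n_i+\beta m)} \;\leq\; 1 - \frac{1}{L^2\bigl(n_i + \cnst{C}\normsq{\mtx{A}_=}\log(1+n)\bigr)} \;=\; \gamma.
$$
Raising to the $j$-th power and chaining with the bound of Theorem~\ref{thm:convergence} gives $\Expect[d(\vct{x}_j,S)^2] \leq \gamma^j\, d(\vct{x}_0,S)^2$, which is the claimed inequality.

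The argument is essentially a substitution, so there is no substantive obstacle; the only points requiring care are (i) confirming that $\mtx{A}_=$ inherits the standardization hypothesis, so that the existence result is invoked for $\mtx{A}_=$ rather than the full matrix $\mtx{A}$, and (ii) tracking the harmless passages $\log(1+n_e)\to\log(1+n)$ and the absorption of the constant coming from the fixed choice of $\delta$ into the unspecified absolute constant $\cnst{C}$. One could in principle optimize over $\delta$ to sharpen this constant, but since the statement asserts the bound only up to an absolute constant, the fixed choice $\delta = 1/2$ suffices.
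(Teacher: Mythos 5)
Your proof is correct and follows exactly the route the paper intends: the paper states Corollary~\ref{cor1} as an immediate consequence of "combining the paving result of Prop.~\ref{prop:intro-paving} with Theorem~\ref{thm:convergence}," and your argument simply writes out that substitution, including the routine details (standardization of $\mtx{A}_=$, fixing $\delta$, bounding $\beta m$, monotonicity of the contraction factor, and $\log(1+n_e)\leq\log(1+n)$) that the paper leaves implicit.
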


\begin{proof}[of Theorem~\ref{thm:convergence}]
Fix an iteration $j$ of Algorithm~\ref{alg}.  We proceed as in \cite{NT12:Block-Kaczmarz} and \cite{LL10:Randomized-Methods}.
First, we suppose that $q \leq \frac{\beta m}{n_{i} + \beta m}$, so that a block $\tau$ of equalities is selected this iteration.  Then writing $P_S$ as the orthogonal projection onto $S$, we have
 $\mtx{b}_\tau = \mtx{A}_\tau P_S \vct{x}_{j-1}$ since $P_S \vct{x}_{j-1} \in S$.   We then have 
\begin{align*}
\vct{x}_j &= \vct{x}_{j-1} + \mtx{A}_\tau^\pinv (\mtx{b}_\tau - \mtx{A}_\tau \vct{x}_{j-1}) \\
&= \vct{x}_{j-1} + \mtx{A}_\tau^\pinv (\mtx{A}_\tau P_S \vct{x}_{j-1} - \mtx{A}_\tau \vct{x}_{j-1}) \\
&= \vct{x}_{j-1} + \mtx{A}_\tau^\pinv \mtx{A}_\tau (P_S \vct{x}_{j-1} - \vct{x}_{j-1}).
\end{align*}

Thus,
\begin{align*}
\|\vct{x}_j &- P_S \vct{x}_{j-1}\|^2\\
 &= \enormsq{\vct{x}_{j-1} - P_S \vct{x}_{j-1} - \mtx{A}_\tau^\pinv \mtx{A}_\tau (\vct{x}_{j-1} - P_S \vct{x}_{j-1})}  \\
&= \enormsq{(\Id - \mtx{A}_\tau^\pinv \mtx{A}_\tau) (\vct{x}_{j-1} - P_S \vct{x}_{j-1})}.
\end{align*}

Taking expectation (over the choice of the block $\tau$, conditioned on previous choices), and using the fact that $\mtx{A}_\tau^\pinv \mtx{A}_\tau$ is an orthogonal projector, along with the properties of the paving yields

\begin{align*}
\Expect \|\vct{x}_j &- P_S \vct{x}_{j-1}\|^2\\
&= \Expect \enormsq{(I - \mtx{A}_\tau^\pinv \mtx{A}_\tau) (\vct{x}_{j-1} - P_S \vct{x}_{j-1})}  \\
&= \enormsq{\vct{x}_{j-1} - P_S \vct{x}_{j-1}} - \Expect \enormsq{\mtx{A}_\tau^\pinv \mtx{A}_\tau (\vct{x}_{j-1} - P_S \vct{x}_{j-1})}  \\
&\leq \enormsq{\vct{x}_{j-1} - P_S \vct{x}_{j-1}} - \frac{1}{\ \beta} \Expect \enormsq{\mtx{A_{\tau}} (\vct{x}_{j-1} - P_S \vct{x}_{j-1})}. 
\end{align*}

Since $d(\vct{x}_{j-1},S) = \enorm{\vct{x}_{j-1} - P_S \vct{x}_{j-1}}$ and $d(\vct{x}_{j}, S) \leq \enorm{\vct{x}_j - P_S \vct{x}_{j-1}}$, this means that

\begin{align}\label{eqs}
\Expect \left[d(\vct{x}_j,S)^2\right] &\leq 
d(\vct{x}_{j-1}, S)^2 - \frac{1}{\ \beta } \Expect \enormsq{\mtx{A_{\tau}} (\vct{x}_{j-1} - P_S \vct{x}_{j-1})} \notag\\
&= d(\vct{x}_{j-1}, S)^2 - \frac{1}{\ \beta m} \sum_{\tau \in T} \enormsq{\mtx{A}_{\tau} \vct{x}_{j-1} - \vct{b}_{\tau}} \notag\\
&= d(\vct{x}_{j-1}, S)^2 - \frac{1}{\ \beta m} \sum_{i \in I_{=}} e(\mtx{A}_{=} \vct{x}_{j-1} - \vct{b}_{=})_{i}^2.
\end{align}

Next suppose that instead $i \in I_\leq$ is selected.  Then since each row $\vct{a_i}$ has unit norm,

\begin{align*}
d(\vct{x}_j,S)^2 &\leq \enormsq{\vct{x}_j - P_S \vct{x}_{j-1}}\\
&= \enormsq{\vct{x}_{j-1} - e(\mtx{Ax}_{j-1}-\vct{b})_i \vct{a_i} - P_S \vct{x}_{j-1}}\\
&= \enormsq{\vct{x}_{j-1} - P_S \vct{x}_{j-1}} + e(\mtx{Ax}_{j-1}-\vct{b})_i^2  \\
&\;\;-2e(\mtx{Ax}_{j-1}-\vct{b})_i\langle \vct{a_i} , \vct{x}_{j-1} - P_S \vct{x}_{j-1} \rangle\\
&\leq d(\vct{x}_{j-1},S)^2 - {e(\mtx{A} \vct{x}_{j-1} - \vct{b})_{i}^2},
\end{align*}

where the last line follows from the fact that $\langle \vct{a_i}, P_S \vct{x}_{j-1} \rangle \leq b_i$ and $e(\mtx{A} \vct{x}_{j-1} - \vct{b})_{i} \geq 0$.
Now taking expectation again we have

\begin{align*}
\Expect \left[d(\vct{x}_j,S)^2\right] &\leq d(\vct{x}_{j-1},S)^2 - \Expect (e(\mtx{A} \vct{x}_{j-1} - \vct{b})_{i}^2) \\
&= d(\vct{x}_{j-1},S)^2 - \frac{1}{\ n_{i}} \sum_{i \in I_\leq} e(\mtx{A}_{\leq} \vct{x}_{j-1} - \vct{b}_\leq)_{i}^2. 
\end{align*}

Combining these results and letting $E_=$ and $E_\leq$ denote the events that a block from $T$ and a row from $I_{\leq}$ is selected, respectively, we have

\begin{align*}
\Expect \left[d(\vct{x}_j,S)^2\right] &= p \cdot \Expect[ d(\vct{x}_{j},S)^2 | E_=] + (1-p) \cdot \Expect [ d(\vct{x}_{j},S)^2 | E_{\leq}] \\
&\leq p\left[d(\vct{x}_{j-1}, S)^2 - \frac{1}{\ \beta m} \sum_{i \in I_=} e(\mtx{A}_{=} \vct{x}_{j-1} - \vct{b}_=)_{i}^2\right] \\
&\;\;+ (1-p) \left[d(\vct{x}_{j-1},S)^2 - \frac{1}{\ n_{i}} \sum_{i \in I_\leq} e(\mtx{A}_{\leq} \vct{x}_{j-1} - \vct{b}_\leq)_{i}^2\right] \\
&= d(\vct{x}_{j-1}, S)^2 - p \cdot \frac{1}{\ \beta m} \sum_{i \in I_=} e(\mtx{A}_{=} \vct{x}_{j-1} - \vct{b}_=)_{i}^2\\
&\;\; - (1-p) \cdot \frac{1}{\ n_{i}} \sum_{i \in I_\leq} e(\mtx{A}_{\leq} \vct{x}_{j-1} - \vct{b}_\leq)_{i}^2.
\end{align*}

Since $p = \frac{\beta m}{\ n_{i} + \beta m}$, we have $\frac{1-p}{n_i} = \frac{1}{n_{i}+\beta m}$ and we can simplify

\begin{align*}
\Expect \left[d(\vct{x}_j,S)^2\right] &\leq d(\vct{x}_{j-1}, S)^2 - \frac{1}{\ n_{i} + \beta m} \Big[\sum_{i \in I_=} e(\mtx{A}_{=} \vct{x}_{j-1} - \vct{b}_=)_{i}^2 \\
&\;\;\;+ \sum_{i \in I_\leq} e(\mtx{A}_{\leq} \vct{x}_{j-1} - \vct{b}_\leq)_{i}^2\Big]  \\
&= d(\vct{x}_{j-1}, S)^2 - \frac{1}{\ n_{i} + \beta m} \enormsq{e(\mtx{A} \vct{x}_{j-1} - \vct{b})} \\
&\leq d(\vct{x}_{j-1}, S)^2 - \frac{1}{L^2(n_{i} + \beta m)} \cdot d(\vct{x}_{j-1},S)^2  \\
&= \left[1 - \frac{1}{L^2(n_{i} + \beta m)}\right] d(\vct{x}_{j-1},S)^2,
\end{align*}
 where we have utilized the Hoffman bound~\eqref{hoffman} in the second inequality.
 
 Utilizing independence of the random selections and recursing on this relation yields the desired result.
\end{proof}

\section{A Discussion about Blocking Inequalities}\label{sec:double}
It is natural to ask whether one can benefit by blocking both the equalities as above and also the inequalities, as described by Algorithm~\ref{alg2}.  Indeed, Section~\ref{sec:exps} will show dramatic improvements 
in computational time when the rows of $\mtx{A}_{=}$ are paved and block projections as in Algorithm~\ref{alg} are used.  So can one benefit even more by paving also the rows of $\mtx{A}_{\leq}$?  
The answer to this question heavily depends on the structure of the matrix $\mtx{A}$. 

\begin{center}
\begin{algorithm}[hb]
\caption{Double Block Kaczmarz Method for a System of Inequalities}
	\label{alg2}
\begin{center} \fbox{
\begin{minipage}{.95\textwidth} 
\vspace{4pt}
\alginout{\begin{itemize}
\item	Matrix $\mtx{A}$ with dimension $n \times d$
\item	Right-hand side $\vct{b}$ with dimension $n$
\item	Partition $T' = \{\tau_1', \dots, \tau_{m'}'\}$ of the row indices $\{1, \dots, n_{i}\}$
\item	Partition $T = \{\tau_1, \dots, \tau_m\}$ of the row indices $\{1, \dots, n_{e}\}$
\item	Initial iterate $\vct{x}_0$ with dimension $d$
\item	Convergence tolerance $\varepsilon > 0$
\end{itemize}}
{An estimate $\hat{\vct{x}}$ to the solution of the system~\eqref{eqn:ineq-system}}
\vspace{8pt}\hrule\vspace{8pt}

\begin{algtab*}
$j \leftarrow 0$

\algrepeat 
	$j \leftarrow j + 1$ \\
	Draw uniformly at random $q$ from $[0,1]$ \\
	\algif{$q \leq \frac{\beta m}{\beta' m' + \beta m}$}
		Choose a block $\tau$ uniformly at random from $T$ \\
		$\vct{x}_j \leftarrow \vct{x}_{j-1} + (\mtx{A}_\tau)^\pinv (\vct{b}_\tau - \mtx{A}_\tau\vct{x}_{j-1})$ \hfill  ({Solve least-squares approximation})\\ 
	\algelse
		Choose a block $\tau'$ uniformly at random from $T'$ \\
		Set $\sigma = \{i\in\tau' : \langle \vct{a_i}, \vct{x}_{j-1} \rangle > b_i\} \subset\tau'$ \hfill (Select unsatisfied subset)\\
		$\vct{x}_j \leftarrow \vct{x}_{j-1} + (\mtx{A}_\sigma)^\pinv (\vct{b}_\sigma - \mtx{A}_\sigma\vct{x}_{j-1})$ \hfill  ({Solve least-squares approximation})\\
	\algend
\alguntil{$\enormsq{ e(\mtx{A}\vct{x}_j - \vct{b}) } \leq \eps^2$}
$\hat{\vct{x}} \leftarrow \vct{x}_j$
\end{algtab*}
\end{minipage}}
\end{center}

\end{algorithm}
\end{center} 

If we only consider $\mtx{A}_{=}$, a block projection as in~\eqref{eqn:block-algorithm} enforces all the equations indexed by $\tau$ to be satisfied.  This is of course desirable
when the rows indexed by $\tau$ correspond to equalities.  Also, if a single inequality corresponding to row $i$ in $\mtx{A}_{\leq}$ is not satisfied and we perform 
a single projection as in~\eqref{eqn:simp-rule}, we are again enforcing that inequality to hold with equality.  However, this improves the estimation 
since in this case we know the solution
set $S$ lies on the opposite side of the hyperplane $\{\vct{x} : \langle \vct{x}, \vct{a_i} \rangle = b_i\}$ as the current estimation (see Figure~\ref{draw1} (a)).
\begin{figure}[h]
\begin{tabular}{ccc}
\includegraphics[scale=0.4]{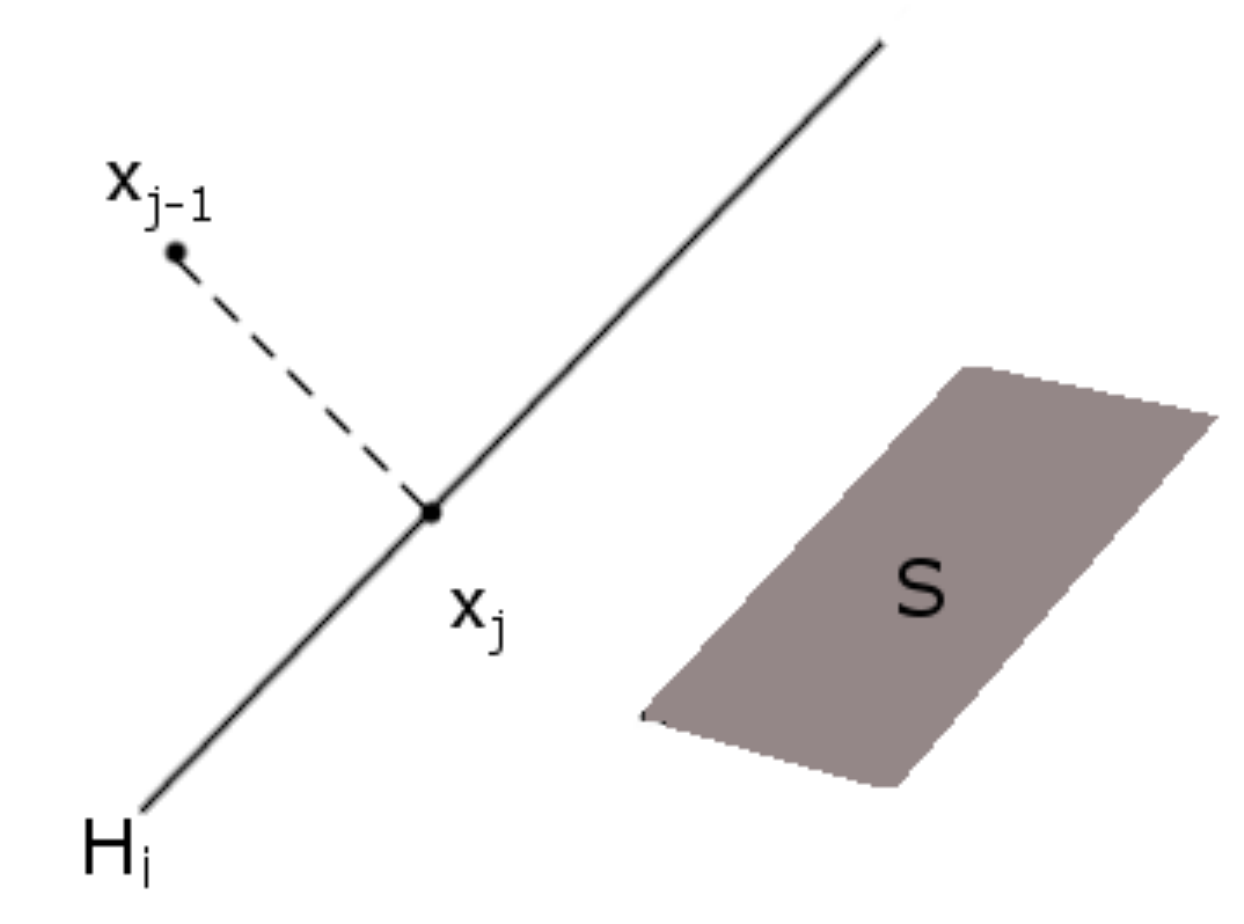}& \includegraphics[scale=0.4]{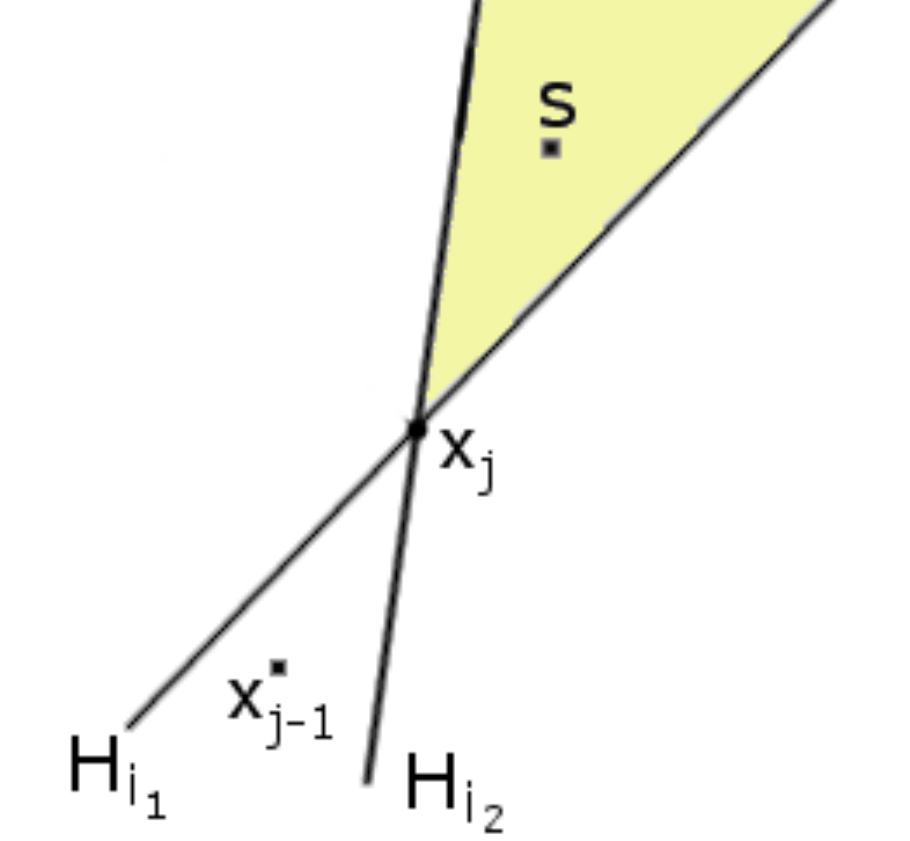}\hspace*{0.5in} & \includegraphics[scale=0.4]{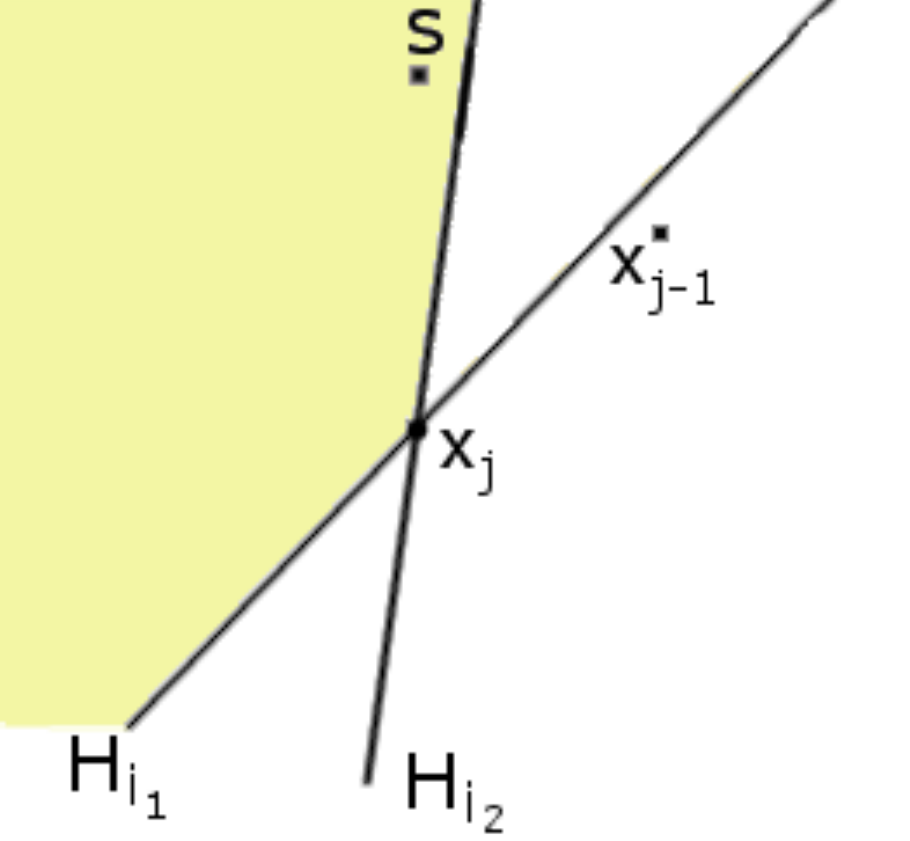}\\
(a) & (b) & (c)\\
\end{tabular}
\caption{Possible geometries of the system. S denotes solution space (or solution point).  Yellow shading denotes regions where inequalities $i_1$ and $i_2$ are
both satisfied. (a) A single projection onto hyperplane $H_i = \{\vct{x} : \langle \vct{a_i}, \vct{x}\rangle = b_i\}$ provides improved estimation.  
(b) Block projection onto intersection of hyperplanes also may provide improved estimation.  (c) Block projection onto intersection of hyperplanes may provide improved estimation.}
\label{draw1}
\end{figure}
On the other hand, if we employ a block projection as in~\eqref{eqn:block-algorithm} to a set of inequalities indexed by $\tau$ which are not satisfied by the current estimation
$\vct{x}_{j-1}$ then we enforce \textit{all of them to hold with equality simultaneously}.  Depending on the geometry of the involved rows, this may result in an improved
estimation or actually one much farther from the solution set.  Of course, one might alternatively want to solve the convex program to project onto
the intersection of the corresponding half-spaces, but we would like to maintain the efficiency and simplicity of the block Kaczmarz method.  

As an illustrative example, Figure~\ref{draw1} (b) and (c) demonstrate two possible scenarios in two dimensions.  Here, the
solution space is a single point marked $S$, and we draw two hyperplanes $H_{i_1}$ and $H_{i_2}$ where $H_i = \{\vct{x} : \langle \vct{a_i}, \vct{x}\rangle = b_i\}$ .
The yellow shaded regions denote areas where both inequalities hold true: $\{\vct{x} : \langle \vct{a}_{i_1}, \vct{x}\rangle \leq b_{i_1} \text{ and } \langle \vct{a}_{i_2}, \vct{x}\rangle \leq b_{i_2}\}$.
 Notice that in (b),
when the angle between $\vct{x}_{j-1} - \vct{x}_j$ and $\vct{s} - \vct{x}_j$ is obtuse, the orthogonal projection of estimation $\vct{x}_{j-1}$ onto 
their intersection is 
guaranteed to be closer to the solution set S.
On the other hand, when that angle is acute we see exactly the opposite, as in (c).  We can quantify this notion by the following definition.

\begin{definition}
For an $r\times d$ matrix $\mtx{A}$ and $\vct{b}\in\mathbb{R}^r$, for row $i$ denote by $\tilde{H}_i$ and $H_i$ the half-space 
$\tilde{H}_i = \{\langle \vct{a_i}, \vct{x}\rangle \leq b_i \}$ and hyperplane ${H}_i = \{\langle \vct{a_i}, \vct{x}\rangle = b_i \}$, 
respectively, and write $P_S$
as the orthogonal projection onto a convex set $S$.
An \textit{obtuse $(m, \beta)$ row  paving} of the matrix $\mtx{A}$ is an $(m, \beta)$ row paving $T=\{\tau_1, \ldots, \tau_m\}$ that also satisfies the following.  
Let $\tau\in T$ and let $\vct{s} \in \cap_{i\in\tau}\tilde{H}_i$, $\vct{w} \in \cap_{i\in\tau}\tilde{H}_i^c$, and 
$\vct{z} = P_{\cap_{i\in\tau}{H}_i}\vct{w}$.  Then 
$$
\langle \vct{w}-\vct{z}, \vct{s}\rangle < 0.
$$
In other words, the angle between $\vct{w}-\vct{z}$ and $\vct{s}$ (and thus $\vct{s} - \vct{z}$) is obtuse.
\end{definition}

We will see that performing block projections on the inequalities in the system only makes sense when one can obtain an obtuse row paving. 
We will use $\vct{w}=\vct{x}_{j-1}$, $\vct{z} = \vct{x}_j$, and $\vct{s}\in S$. 
Notice that if $i_1, i_2 \in \tau \in T$, 
then the partition used in the system depicted in Figure~\ref{draw1} (c) does not constitute an obtuse row paving. 

We conduct two simple experiments to demonstrate the different behavior of the algorithm.  In all cases the matrix $\mtx{A}$ is a $300\times 100$
matrix with standard normal entries, $100$ rows correspond to inequalities, and $\vct{b}$ is generated so that the solution set $S$ is non-empty.  
We measure the residual error which we define as $\enorm{e(\mtx{A}\vct{x}_j - \vct{b})}$.
Figure~\ref{fig:bad} (a) shows the
behavior of the block method with this matrix and a row paving obtained via a random row partition of $30$ blocks ($10$ rows per block).
This generation will create a matrix with paving that with very high probability is not an obtuse row paving.  As Figure~\ref{fig:bad} 
demonstrates, the block method does not converge to a solution in this case.  However, as Figure~\ref{fig:bad} (c) shows, the simple Kaczmarz method succeeds in identifying
a point in the solution space.  Next, we create a matrix in the exact same way, and create the same random row paving.  Then, however,
we iterate through every block in the paving corresponding to inequalities and if two rows $i$ and $k$ in a block satisfy $\langle \vct{a_i}, \vct{a}_k\rangle >0$, we
replace row $\vct{a_i}$ with $-\vct{a_i}$ and entry $b_i$ with $-b_i$.  This guarantees every block in the paving yields a geometry
like that shown in Figure~\ref{draw1} (b), and gives an obtuse row paving. Note that of course this changes the solution space
as well so one cannot employ this strategy in general.  We then add positive values to the entries in $\vct{b}$ 
corresponding to inequalities to ensure the solution set $S$ is non-empty.  With this new system and paving, we again run the block method and 
see that the method
now converges to a point in the solution set, as seen in Figure~\ref{fig:bad} (b).
 
 \begin{figure}[ht]
\begin{center}
\begin{tabular}{ccc}
\includegraphics[scale=0.35]{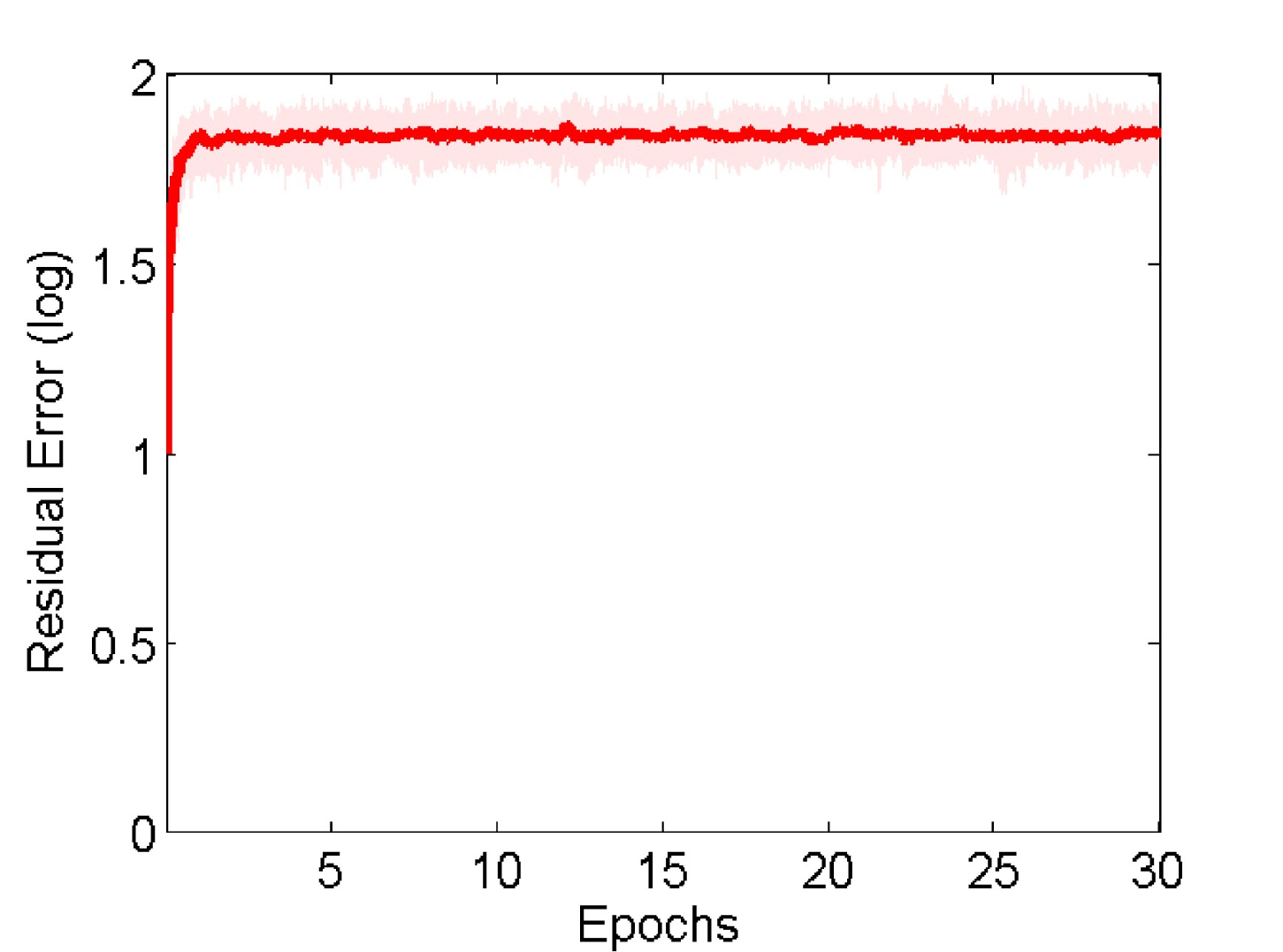} & \includegraphics[scale=0.35]{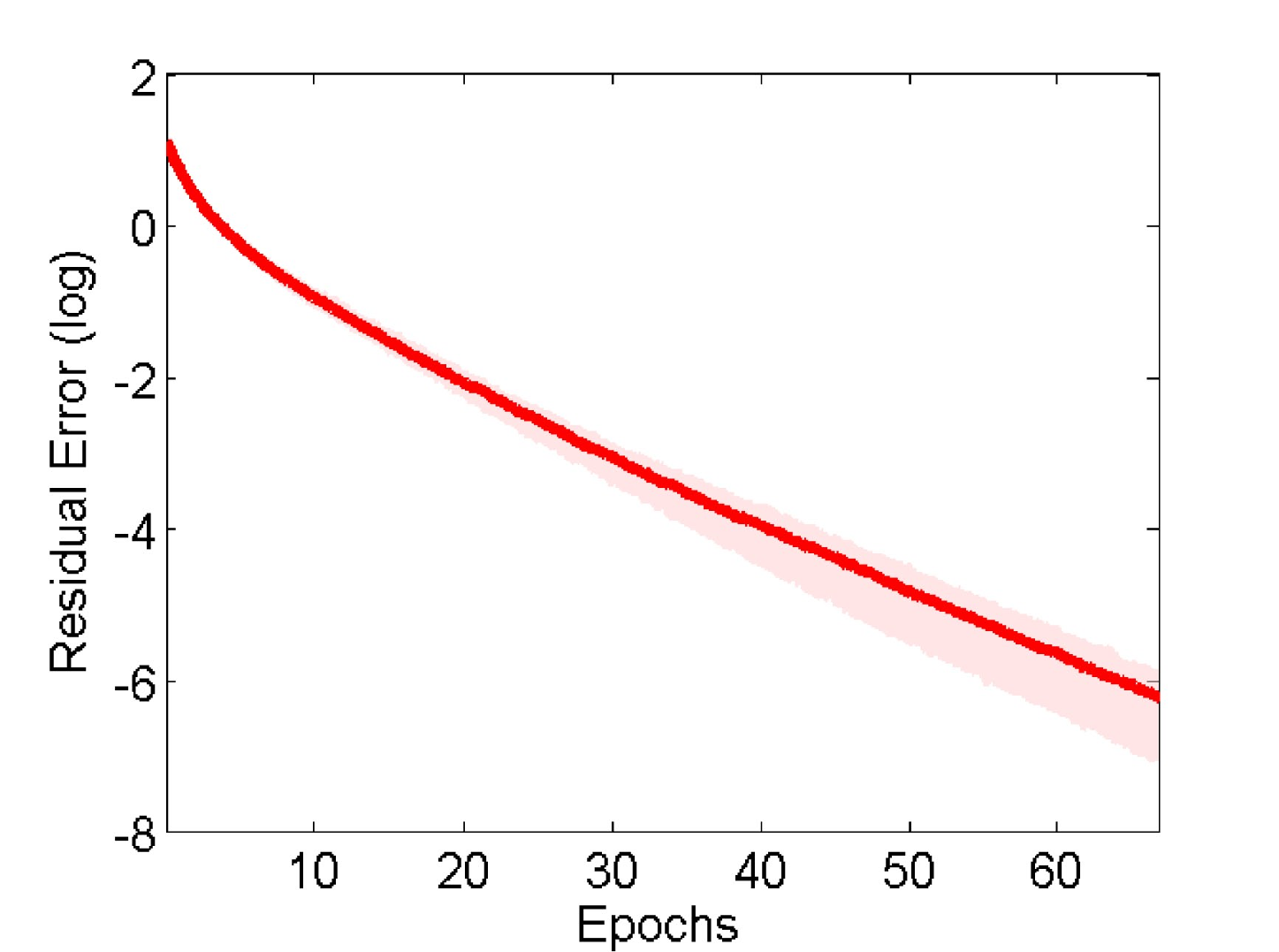} & \includegraphics[scale=0.35]{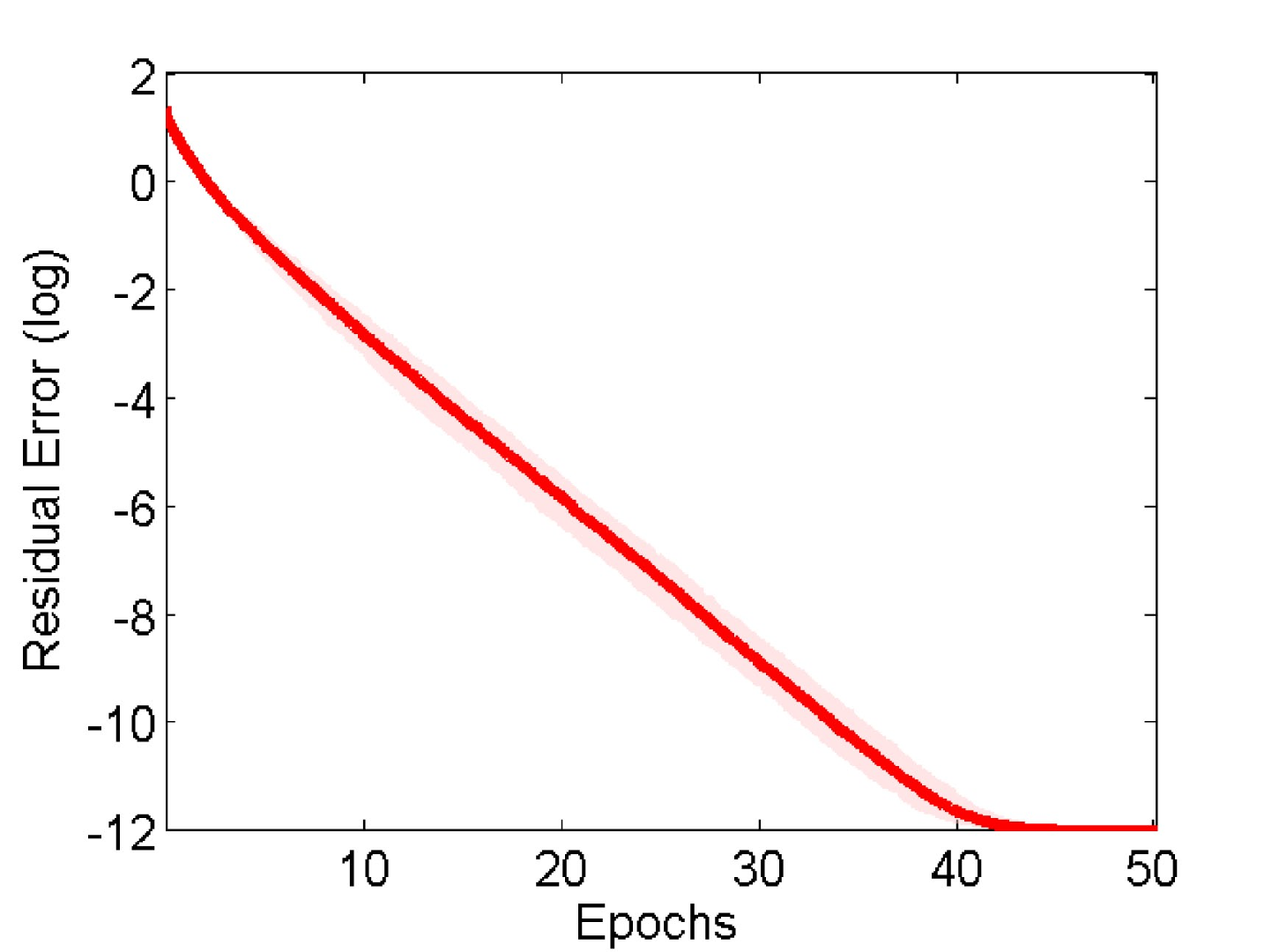} \\
(a) & (b) & (c) \\
\end{tabular}
\caption{ Residual error of the Kaczmarz Method per epoch: a) Median residual error of block method over $40$ trials for matrix $\mtx{A}$ not having an 
obtuse row paving, b) Median residual error of block method over $40$ trials for matrix $\mtx{A}$ using an 
obtuse row paving, c) Median residual error of simple method over $40$ trials for same matrix as in a). Shaded region spans across minimum and maximum
values over all trials and solid line denotes median value.\label{fig:bad}}
\end{center}
\end{figure}

With this definition we obtain the following result, whose
proof can be found in the appendix.

\begin{theorem}\label{thm:obtuse}
Let $\mtx{A}$ satisfy the assumptions of Theorem~\ref{thm:convergence} and in addition have an obtuse $(m', \beta')$ row paving of $\mtx{A}_{\leq}$.  Let $x_1, \ldots$
denote the iterates of Algorithm~\ref{alg2}.  Then using the notation of Theorem~\ref{thm:convergence},

\begin{align*}
\Expect [d(\vct{x}_j,S)^2]
	 \leq \left[1 - \frac{1}{L^2(\beta' m' + \beta m)}\right]^j d(\vct{x}_{0},S)^2.
\end{align*}
\end{theorem}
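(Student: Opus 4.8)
The plan is to mirror the proof of Theorem~\ref{thm:convergence}, splitting on the coin flip $q$ into an equality-block case and an inequality-block case, and recombining with the weights $p = \frac{\beta m}{\beta' m' + \beta m}$ and $1-p = \frac{\beta' m'}{\beta' m' + \beta m}$. When an equality block $\tau$ is drawn the update is identical to the one already analyzed, so I would reuse that computation verbatim to get
$$
\Expect\!\left[d(\vct{x}_j,S)^2 \mid E_=\right] \;\leq\; d(\vct{x}_{j-1},S)^2 - \frac{1}{\beta m}\sum_{i\in I_=} e(\mtx{A}_= \vct{x}_{j-1} - \vct{b}_=)_i^2 .
$$
All the genuinely new work lives in the inequality-block case.

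When an inequality block $\tau'$ is drawn, the algorithm projects $\vct{x}_{j-1}$ orthogonally onto $H_\sigma := \bigcap_{i\in\sigma} H_i$, so $\vct{x}_j = P_{H_\sigma}\vct{x}_{j-1}$, where by construction $\vct{x}_{j-1}$ lies strictly outside every half-space indexed by $\sigma$, placing us in the regime covered by the obtuse-paving hypothesis. Writing $\vct{s} = P_S \vct{x}_{j-1}$ (so $\vct{s}$ satisfies every inequality, hence $\vct{s}\in\bigcap_{i\in\sigma}\tilde H_i$), I would expand
$$
\|\vct{x}_j - \vct{s}\|^2 = \|\vct{x}_{j-1} - \vct{s}\|^2 - \|\vct{x}_{j-1} - \vct{x}_j\|^2 - 2\langle \vct{x}_{j-1} - \vct{x}_j,\, \vct{x}_j - \vct{s}\rangle .
$$
Applying the obtuse paving property to the active set $\sigma$ with $\vct{w} = \vct{x}_{j-1}$, $\vct{z} = \vct{x}_j$ is exactly what forces $\langle \vct{x}_{j-1} - \vct{x}_j,\, \vct{x}_j - \vct{s}\rangle \geq 0$, letting us discard the cross term and conclude $d(\vct{x}_j,S)^2 \leq \|\vct{x}_j-\vct{s}\|^2 \leq d(\vct{x}_{j-1},S)^2 - \|\vct{x}_{j-1} - \vct{x}_j\|^2$.

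It remains to lower-bound the step length. Since $\vct{x}_{j-1} - \vct{x}_j = \mtx{A}_\sigma^\pinv(\mtx{A}_\sigma\vct{x}_{j-1} - \vct{b}_\sigma)$ and $\mtx{A}_\sigma^\pinv = \mtx{A}_\sigma^*(\mtx{A}_\sigma\mtx{A}_\sigma^*)^{-1}$, putting $\vct{r} = \mtx{A}_\sigma\vct{x}_{j-1} - \vct{b}_\sigma$ gives $\|\vct{x}_{j-1}-\vct{x}_j\|^2 = \vct{r}^*(\mtx{A}_\sigma\mtx{A}_\sigma^*)^{-1}\vct{r} \geq \lambda_{\max}(\mtx{A}_\sigma\mtx{A}_\sigma^*)^{-1}\|\vct{r}\|^2$. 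The key observation is that $\mtx{A}_\sigma\mtx{A}_\sigma^*$ is a principal submatrix of $\mtx{A}_{\tau'}\mtx{A}_{\tau'}^*$ (because $\sigma\subseteq\tau'$), so Cauchy interlacing gives $\lambda_{\max}(\mtx{A}_\sigma\mtx{A}_\sigma^*) \leq \lambda_{\max}(\mtx{A}_{\tau'}\mtx{A}_{\tau'}^*) \leq \beta'$; this is what lets us invoke the paving bound for the entire block even though we project only onto the violated subset. Noting that the entries of $\vct{r}$ are precisely the nonzero values $e(\mtx{A}\vct{x}_{j-1}-\vct{b})_i$ for $i\in\sigma$ while the $i\in\tau'\setminus\sigma$ terms vanish, I would obtain $\|\vct{x}_{j-1}-\vct{x}_j\|^2 \geq \frac{1}{\beta'}\sum_{i\in\tau'} e(\mtx{A}_\leq\vct{x}_{j-1}-\vct{b}_\leq)_i^2$, and averaging uniformly over the $m'$ blocks of $T'$ yields the analogue of the equality bound with $\beta m$ replaced by $\beta' m'$.

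Finally I would combine the two cases. The choice $p = \frac{\beta m}{\beta' m'+\beta m}$ is engineered so that $\frac{p}{\beta m} = \frac{1-p}{\beta' m'} = \frac{1}{\beta' m'+\beta m}$, whence both residual sums receive the common weight $\frac{1}{\beta'm'+\beta m}$ and reassemble into $\frac{1}{\beta'm'+\beta m}\|e(\mtx{A}\vct{x}_{j-1}-\vct{b})\|^2$; one application of the Hoffman bound~\eqref{hoffman} followed by recursion on $j$ delivers the stated rate. The main obstacle is the inequality-block step: both establishing nonnegativity of the cross term, where the obtuse hypothesis is indispensable and where an ordinary paving fails (cf.\ Figure~\ref{draw1}(c)), and justifying the passage from the active set $\sigma$ to the full block $\tau'$ in the spectral bound via interlacing.
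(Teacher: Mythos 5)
Your proposal is correct and takes essentially the same route as the paper's proof: reuse the equality-block estimate \eqref{eqs}, control the inequality-block step via the obtuse-paving property and the paving spectral bound applied to the pruned set $\sigma$, then recombine using $\frac{p}{\beta m} = \frac{1-p}{\beta' m'} = \frac{1}{\beta' m' + \beta m}$ before invoking the Hoffman bound \eqref{hoffman} and recursing. Your two local refinements are sound and in fact sharpen spots where the paper is terse: your cross-term inequality $\langle \vct{x}_{j-1}-\vct{x}_j,\,\vct{x}_j-\vct{s}\rangle \geq 0$ is algebraically equivalent to the paper's key estimate \eqref{star}, which the paper derives instead by a lengthier trigonometric argument with an auxiliary point $\vct{t}$, and your Cauchy-interlacing step (since $\sigma \subset \tau'$, $\lambda_{\max}(\mtx{A}_\sigma\mtx{A}_\sigma^*) \leq \lambda_{\max}(\mtx{A}_{\tau'}\mtx{A}_{\tau'}^*) \leq \beta'$) makes explicit what the paper justifies only as ``the paving properties.''
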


Note that row pavings of standardized matrices can be obtained readily, often by random partitions \cite{tropp2011improved,tropp2012user,NT12:Block-Kaczmarz}, whereas obtuse row pavings may be much
more challenging to obtain in general.  Of course, by default the trivial paving which assigns each set $\tau$ to a single row always admits an obtuse row paving.
We focus on Algorithm~\ref{alg} which paves only $\mtx{A}_=$, and leave further analysis of Algorithm~\ref{alg2} and 
constructions of obtuse row pavings for future work.

\section{Experiments}\label{sec:exps}

We use {\sc Matlab} to run some experiments using random matrices to test the convergence of the block Kaczmarz method applied to a 
system of equalities and inequalities. In each experiment, we create a random $500$ by $100$ matrix $\mtx{A}$ where each element is an 
independent standard normal random variable. Each entry is then divided by the norm of its row so that the matrix is standardized. 
The first 400 rows of matrix $\mtx{A}$ compose $\mtx{A}_{=}$, and the remaining 100 rows are set as inequalities of $\mtx{A}_{\leq}$ 
in the method described by~\eqref{eqn:ineq-system}. The experiments are run using the following procedure.
For each of 100 trials, 
\begin{enumerate}
\item Create matrix $\mtx{A}$ in the manner described above.
\item Create $\vct{x}_{\star}$ where each entry is selected independently from a standard normal distribution. Set $\vct{b} = \mtx{A} \vct{x}_{\star}$.
\item Pave submatrix $\mtx{A}_{=}$ into 16 blocks with 25 equalities per block by a random partitioning of the rows.
\item Set initial approximations $\vct{x}_{0}^{\text{block}} = \vct{x}_{0}^{\text{simp}} = \mtx{A}^*\vct{b}$.
\item Draw $q$ uniformly at random from $[0,1]$.
\begin{enumerate}
\item If $q \leq \frac{n_{e}}{\ n}$, choose block $\{1,...,m\}$ uniformly at random and update iterate $\vct{x}_{j}^{\text{block}}$ using~\eqref{eqn:block-algorithm}. 
(Note that the threshold $\frac{n_{e}}{\ n}$ is different than that given in the main algorithm and theorem, but it is easier to calculate and seems
to work fine in practice.)
\item Else, choose a row uniformly at random from $\{401,...,500\}$ and update iterate $\vct{x}_{j}^{\text{block}}$ using~\eqref{eqn:ineq-rule}.
\item Update iterate $\vct{x}_{j}^{\text{simp}}$ using~\eqref{eqn:ineq-rule}.
\end{enumerate}
\end{enumerate}
For both the simple and block algorithms, the median, minimum, and maximum values of the residual $\enormsq{e(\mtx{A}\vct{x}_{j}-\vct{b})}$ of the 100 trials are recorded for each iteration $j$.

Figure~\ref{fig:plots} compares the performance of the block Kaczmarz method used in this paper and the standard Kaczmarz method described by 
Leventhal and Lewis \cite{LL10:Randomized-Methods}. The plot in Figure~\ref{fig:plots} (a) compares convergence per iteration. 
As the block Kaczmarz method enforces multiple equalities per iteration, it is unsurprising that it performs better in this experiment.
Figure~\ref{fig:plots} (b) displays the convergence of the two methods per epoch. The block Kaczmarz algorithm has an epoch of $m + n_i$ iterations, and the
 standard Kaczmarz method has an epoch of size $n$. 
Here, to be fair we only count an iteration towards an epoch if the estimated solution $\vct{x}_{j} \neq \vct{x}_{j-1}$. 
Thus in the case where a chosen inequality 
is already satisfied for iteration $j$, this iteration does not count towards an epoch since no computation is being performed.  We noticed, however,
that whether or not we modified the count in this way, the behavior still produces results very similar to Figure~\ref{fig:plots}. Once again the experiments yielded faster 
convergence with the block Kaczmarz approach.  It is interesting to compare the results of Figure~\ref{fig:plots} (b) and those of Figure~\ref{fig:bad}
(b) and (c).  The per-epoch convergence of the methods and whether the block or standard appears faster varies slightly 
and depends on both the number of rows and columns.  In general, 
the per-epoch convergence rates are reasonably comparable, as the analysis suggests.
However, Figure~\ref{fig:plots} (c) compares the rate of convergence of the two algorithms by plotting the residual against the 
CPU time expended in the simulation. 
We believe that the ability to utilize efficient matrix--vector multiplication gives the method 
significantly improved convergence per second 
relative to the standard Kaczmarz algorithm, although other mechanisms may certainly be at work as well.

\begin{figure}[ht]
\begin{center}
\begin{tabular}{ccc}
\includegraphics[scale=0.35]{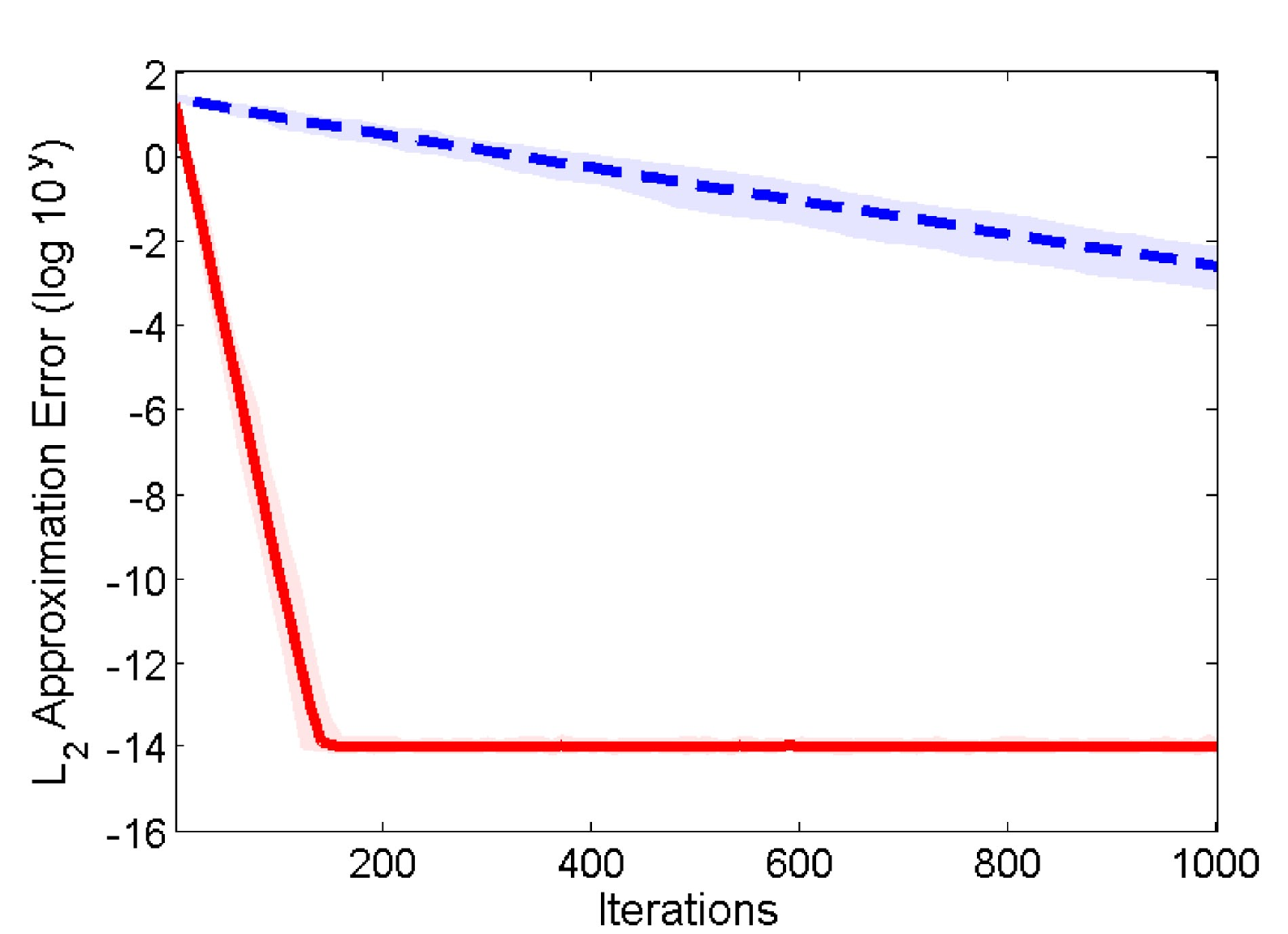} & \includegraphics[scale=0.35]{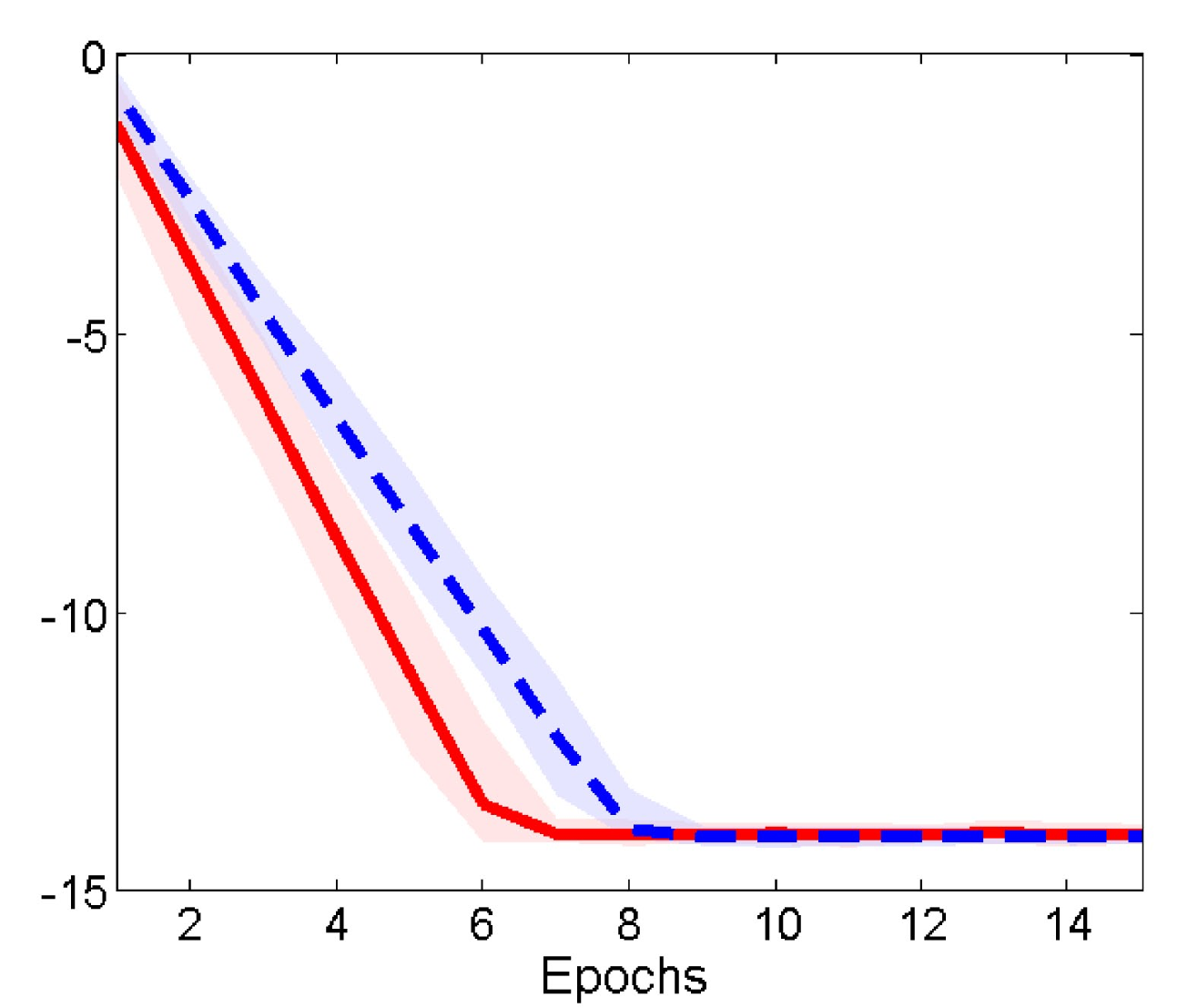} &  \includegraphics[scale=0.35]{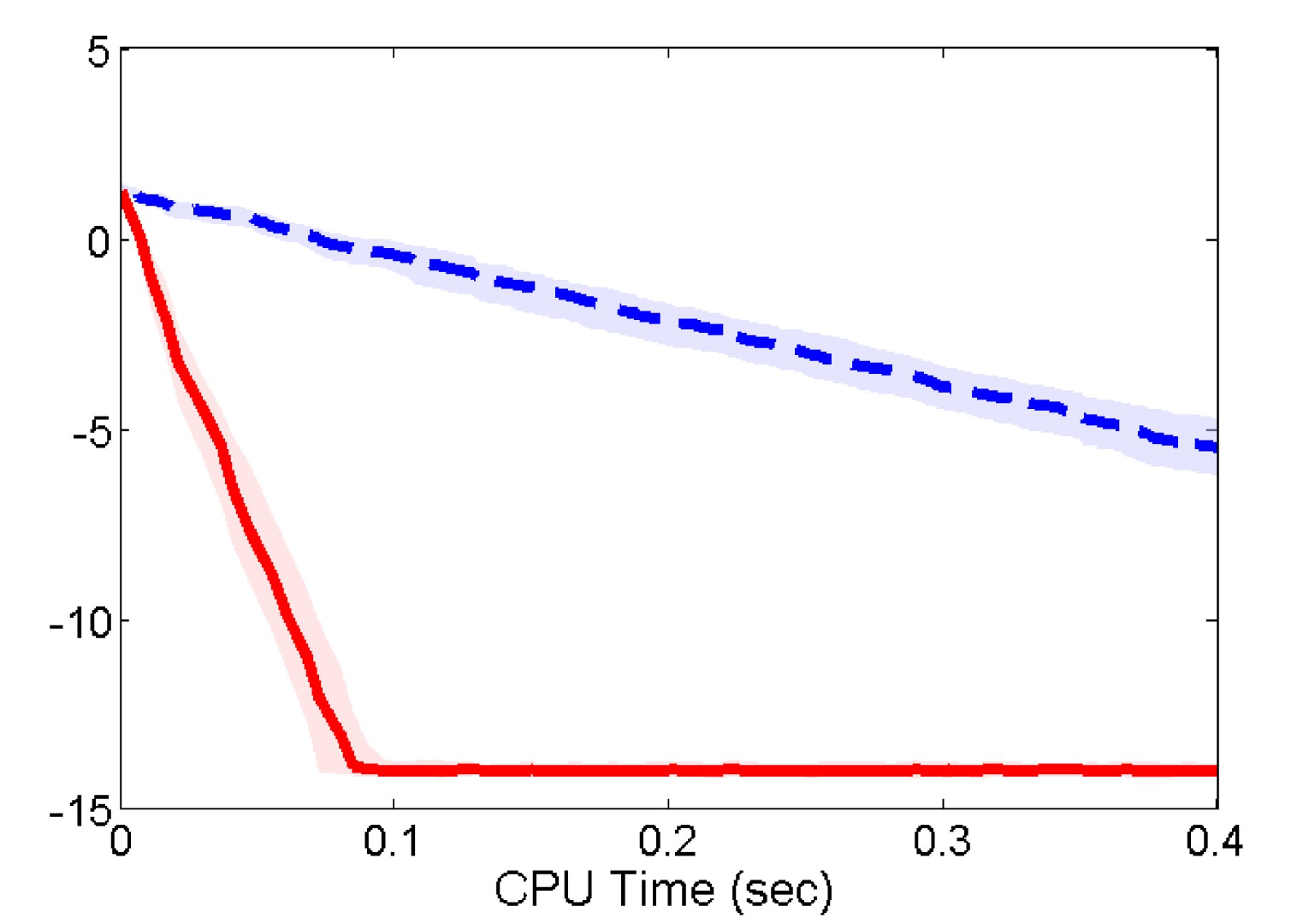} \\
(a) & (b) & (c) \\
\end{tabular}
\caption{ Residual error of the Block Kaczmarz Method (solid red) vs. Simple Kaczmarz Method (dashed blue) as a function of (a) Iterations, (b) Epochs, (c) CPU time.  Shaded region spans
from minimum to maximum value over $100$ trials; lines denote the median value.\label{fig:plots}}
\end{center}
\end{figure}

\section{Conclusion and Related Work}\label{sec:end} 
 The Kaczmarz algorithm was first proposed in
 \cite{Kac37:Angenaeherte-Aufloesung}. Kaczmarz demonstrated that the method
converged to the solution of linear system $\mtx{A}\vct{x} = \vct{b}$ for
square, non-singular matrix $\mtx{A}$.  Since then, the method has been
utilized in the context of computer tomography as the \textit{Algebraic
Reconstruction Technique} (ART) \cite{GBH70:Algebraic-Reconstruction,Byr08:Applied-Iterative,Nat01:Mathematics-Computerized,herman2009fundamentals}.
 Empirical results suggested that
randomized selection offered improved convergence over the cyclic
scheme \cite{HS78:Angles-Null,HM93:Algebraic-Reconstruction}. Strohmer and
Vershynin \cite{SV09:Randomized-Kaczmarz} were the first to prove an expected
linear convergence rate using a randomized Kaczmarz algorithm with specific
random control. This result was extended by Needell \cite{Nee10:Randomized-Kaczmarz} to apply to inconsistent systems, which shows a linear convergence
rate to within a fixed radius around the least-squares solution. 
Almost-sure convergence guarantees were recently proved by Chen and Powell \cite{CP12:Almost-Sure-Convergence}.
Zouzias and
Freris \cite{ZF13:Kaczmarz-Least-Squares} analyze a modified version of the
method in the inconsistent case, using a variant motivated by
Popa \cite{Pop99:Block-Projections-Algorithms} to reduce the residual and
thereby converge to the least squares solution.  Relaxation parameters
can also be introduced to obtain convergence to the least squares solution,
see e.g. \cite{whitney1967two,censor1983strong,tanabe1971projection,hanke1990acceleration},
and partially weighted sampling can lead to a tradeoff between convergence rate and radius \cite{NSW13:SGD-Kaczmarz}.
 Liu,
Wright, and Sridhar \cite{LWS14:Parallel-Kaczmarz} discuss applying a
parallelized variant of the randomized Kaczmarz method, demonstrating that the
convergence rate can be increased almost linearly by bounding the number of
processors by a multiple of the number of rows of $\mtx{A}$.

The block Kaczmarz updating method was introduced by Elfving \cite{Elf80:Block-Iterative-Methods} 
as a special case of the more general framework by Eggermont et.al. \cite{EHL81:Iterative-Algorithms}. 
The notion of using blocking in projection methods is certainly not new, and there is a large
amount of literature on these types of methods, see e.g. \cite{XZ02:Method-Alternating,Byr08:Applied-Iterative} and references therein. 
Needell and Tropp \cite{NT12:Block-Kaczmarz} provide the first analysis showing an
 expected linear convergence rate which depends on the
properties of the matrix $\mtx{A}$ and of the submatrices $\mtx{A_{\tau}}$ resulting
from the paving, connecting pavings and the block Kaczmarz scheme. 
The use of specialized blocks appears elsewhere, in particular, the works of
Popa use blocks with orthogonal rows that are beneficial for the
block Kaczmarz method \cite{Pop99:Block-Projections-Algorithms,Pop01:Fast-Kaczmarz-Kovarik,Pop04:Kaczmarz-Kovarik-Algorithm}. 
Needell, Zhao, and
Zouzias \cite{NZZ14:Block-Least-Squares} expand on the results
from \cite{NT12:Block-Kaczmarz} and \cite{ZF13:Kaczmarz-Least-Squares} to
demonstrate convergence to the least-squares solution for an inconsistent system
using the block Kaczmarz method. Again the block approach can yield faster
convergence than the simple method.

The Kaczmarz method was first applied to a system of equalities and inequalities by Leventhal and Lewis  \cite{LL10:Randomized-Methods},
who also consider polynomial constraints with the method. 
They give a linear convergence rate to the feasible solution space $S$, using $\fnormsq{\mtx{A}}$ and the Hoffman 
constant \cite{Hof52:Approximate-Solutions}. We apply the block Kaczmarz scheme to the system described in \cite{LL10:Randomized-Methods}, 
combining their result with that of Needell and Tropp \cite{NT12:Block-Kaczmarz} to acquire a completely generalized result.  We highlight several important complications
which arise when attempting to apply the block scheme to inequalities.  Nonetheless, whether a paving is used only partially or for
the complete system, significant reduction in computational time can be achieved.

\subsection{Future Work}
There are many interesting open problems related to the block Kazcmarz method and linear systems with inequalities.  It has been well observed
in the literature that selecting rows (or blocks) \textit{without replacement} rather than with replacement as in the theoretical results leads
to faster a convergence rate empirically \cite{RefWorks:533,NT12:Block-Kaczmarz}.  When selecting without replacement, independence
between iterations vanishes, making a theoretical analysis more challenging.  Secondly, it would be interesting to further investigate
the use of obtuse row pavings.  In systems with a large number of inequalities, the ability to pave the submatrix $\mtx{A}_\leq$ with an obtuse
row paving would lead to significantly faster convergence.  In that case, one may like to identify a more general geometric property about
the system that permits such pavings or an alternative formulation that offers convergence of the full block method.

\appendix

\section{Proof of Theorem~\ref{thm:obtuse}}
\begin{proof}
Fix an iteration $j$ of Algorithm~\ref{alg2}.  As in the proof of Theorem~\ref{thm:convergence}, if a block of
equalities is selected this iteration, then we again have~\eqref{eqs}.  So we next instead consider the case when a block of inequalities is selected, and call
this block $\tau'$, and its pruned subset $\sigma$.  Set $\vct{s} = P_S\vct{x}_{j-1}$, where again $P_S$ denotes the orthogonal projection onto the solution set $S$.
If we write $\tilde{H}_i = \{\vct{x} : \langle \vct{a_i}, \vct{x}\rangle \leq b_i\}$ and ${H}_i = \{\vct{x} : \langle \vct{a_i}, \vct{x}\rangle = b_i\}$, then by their definitions we have
$$
\vct{s} \in \cap_{i\in \sigma}\tilde{H}_i,\quad \vct{x}_{j-1} \in \cap_{i\in \sigma}\tilde{H}_i^c,\quad\text{and}\quad \vct{x}_j = 
P_{\cap_{i\in \sigma}{H}_i}\vct{x}_{j-1}.
$$
Then since $\sigma$ is part of an obtuse paving, the angle between $\vct{x}_j - \vct{x}_{j-1}$ and $\vct{s} -\vct{x}_{j-1}$ must be obtuse.  There thus exists a point
$\vct{t}$ on the line segment $L = \{\gamma\vct{x}_{j-1} + (1-\gamma)\vct{s} : 0 \leq \gamma \leq 1\}$ such that $\vct{x}_{j-1} - \vct{x}_j$ and $\vct{t} - \vct{x}_j$ are orthogonal (see Figure~\ref{draw5}).

\begin{figure}
\includegraphics[scale=0.4]{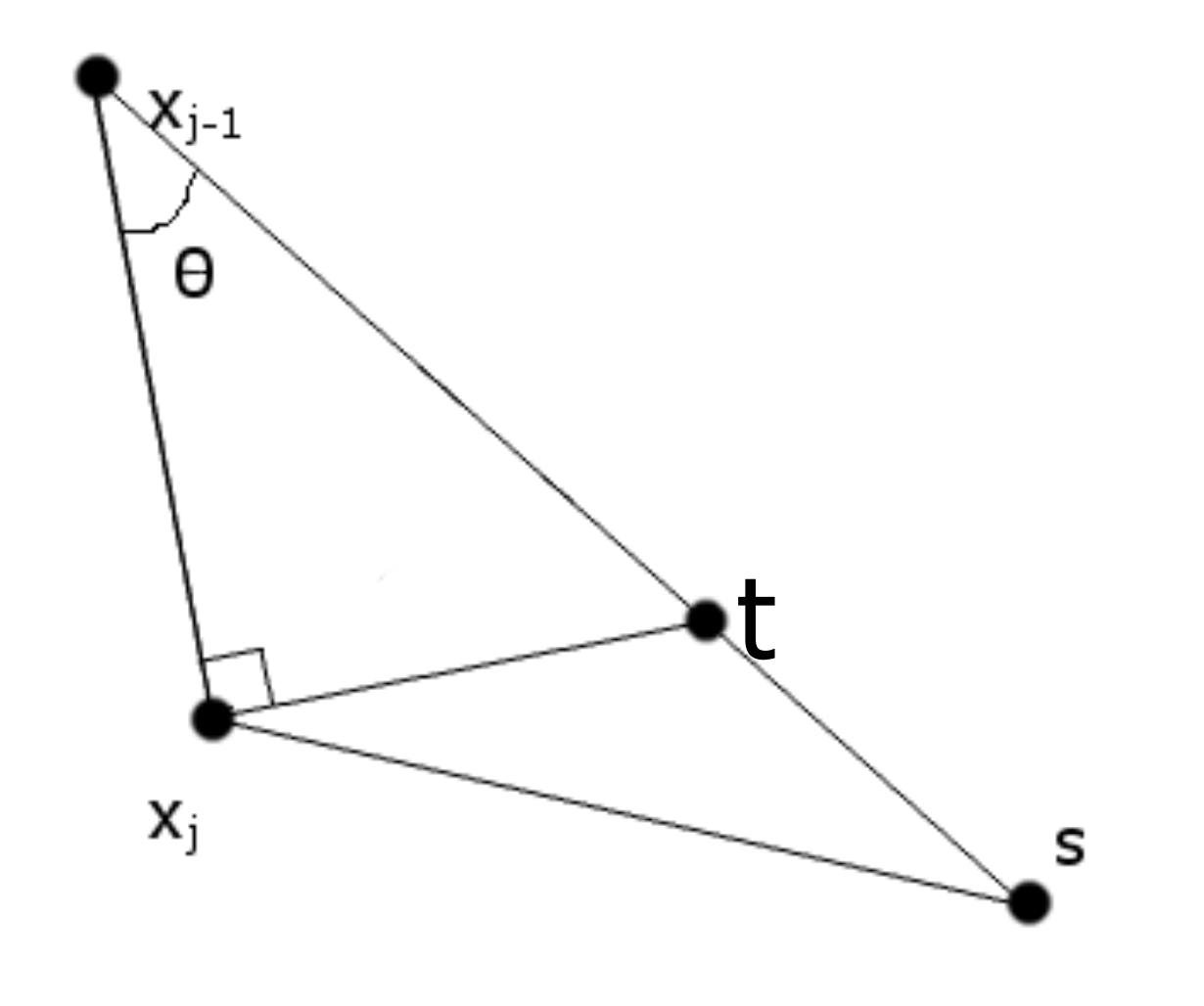}
\caption{Geometry of system.}\label{draw5}
\end{figure}

Now since $\vct{t} \in L$, we have $\enorm{\vct{t} - \vct{x}_{j-1}} \leq \enorm{\vct{x}_{j-1} - \vct{s}}$, and thus letting $\theta$ denote the angle between 
$\vct{x}_j - \vct{x}_{j-1}$ and $\vct{t} - \vct{x}_{j-1}$ (see Figure~\ref{draw5}), we have
\begin{align*}
\enorm{\vct{x}_j - \vct{x}_{j-1}} &\leq \enorm{\vct{x}_{j-1} - \vct{s}}\cdot\frac{\enorm{\vct{x}_j - \vct{x}_{j-1}} }{\enorm{\vct{t} - \vct{x}_{j-1}}}\\
&= \enorm{\vct{x}_{j-1} - \vct{s}}\cdot\cos\theta\\
&= \frac{\enorm{\vct{x}_j - \vct{x}_{j-1}}\cdot\enorm{\vct{s} - \vct{x}_{j-1}}\cdot\cos\theta}{\enorm{\vct{x}_j - \vct{x}_{j-1}}}\\
&= \frac{\langle \vct{s} - \vct{x}_{j-1}, \vct{x}_j - \vct{x}_{j-1}\rangle}{\enorm{\vct{x}_j - \vct{x}_{j-1}}}\\
&= \frac{-\langle \vct{x}_{j-1} - \vct{s}, \vct{x}_j - \vct{x}_{j-1}\rangle}{\enorm{\vct{x}_j - \vct{x}_{j-1}}}.
\end{align*}

Thus, we have that
$$
\langle \vct{x}_{j-1} - \vct{s}, \vct{x}_{j} - \vct{x}_{j-1}\rangle \leq -\enormsq{\vct{x}_j - \vct{x}_{j-1}}.
$$

By the definition of $\vct{x}_j$, this means that
\begin{equation}\label{star}
\langle \vct{x}_{j-1} - \vct{s}, \mtx{A}_{\sigma}^\pinv(\vct{b}_{\sigma} - \mtx{A}_{\sigma}\vct{x}_{j-1})\rangle \leq -
\enormsq{{\mtx{A}_{\sigma}^\pinv}(\vct{b}_{\sigma} - \mtx{A}_{\sigma}\vct{x}_{j-1})}.
\end{equation}

Using this along with the paving properties we see that
\begin{align*}
\enormsq{\vct{x}_{j} - \vct{s}} &= \enormsq{\vct{x}_{j-1} - \vct{s} + {\mtx{A}_{\sigma}^\pinv}(\vct{b}_{\sigma} - \mtx{A}_{\sigma}\vct{x}_{j-1})}\\
&= \enormsq{\vct{x}_{j-1} - \vct{s}} + 2\langle \vct{x}_{j-1} - \vct{s}, \mtx{A}_{\sigma}^\pinv(\vct{b}_{\sigma} 
- \mtx{A}_{\sigma}\vct{x}_{j-1})\rangle \\
&\;\;+ \enormsq{\mtx{A}_{\sigma}^\pinv(\vct{b}_{\sigma} - \mtx{A}_{\sigma}\vct{x}_{j-1})}\\
&\leq \enormsq{\vct{x}_{j-1} - \vct{s}} - \enormsq{\mtx{A}_{\sigma}^\pinv(\vct{b}_{\sigma} - \mtx{A}_{\sigma}\vct{x}_{j-1})}\\
&= d(\vct{x}_{j-1}, S)^2  - \enormsq{\mtx{A}_{\sigma}^\pinv(\vct{b}_{\sigma} - \mtx{A}_{\sigma}\vct{x}_{j-1})}\\
&\leq d(\vct{x}_{j-1}, S)^2  - \frac{1}{\beta'}\enormsq{\vct{b}_{\sigma} - \mtx{A}_{\sigma}\vct{x}_{j-1}}.\\
\end{align*}

Thus, taking expectation (over the choice of $\tau'$, conditioned on previous choices), yields
\begin{align*}
\Expect [d(\vct{x}_{j}, S)^2] &\leq \Expect \enormsq{\vct{x}_{j} - \vct{s}}\\
&\leq d(\vct{x}_{j-1}, S)^2 - \frac{1}{\beta'}\Expect\enormsq{\vct{b}_{\sigma} - \mtx{A}_{\sigma}\vct{x}_{j-1}}\\
&= d(\vct{x}_{j-1}, S)^2 - \frac{1}{\beta'}\Expect\enormsq{e(\vct{b}_{\tau'} - \mtx{A}_{\tau'}\vct{x}_{j-1})}\\
&= d(\vct{x}_{j-1}, S)^2 - \frac{1}{m'\beta'}\sum_{\tau'\in T'}\enormsq{e(\vct{b}_{\tau'} - \mtx{A}_{\tau'}\vct{x}_{j-1})}\\
&= d(\vct{x}_{j-1}, S)^2 - \frac{1}{m'\beta'}\enormsq{e(\vct{b}_{\leq} - \mtx{A}_{\leq}\vct{x}_{j-1})}.\\
\end{align*}

Combining this with~\eqref{eqs} and letting $E_=$ and $E_\leq$ denote the events that a block from $T$ and a block from $T'$ is selected, respectively, we have

\begin{align*}
\Expect \left[(d(\vct{x}_j,S)^2\right] &= p \cdot \Expect[ d(\vct{x}_{j},S)^2 | E_=] + (1-p) \cdot \Expect [ d(\vct{x}_{j},S)^2 | E_{\leq}] \\
&\leq p\left[d(\vct{x}_{j-1}, S)^2 - \frac{1}{\ \beta m} \sum_{i \in I_=} e(\mtx{A}_{=} \vct{x}_{j-1} - \vct{b}_=)_{i}^2\right] \\
&\;\;+ (1-p) \left[d(\vct{x}_{j-1}, S)^2 - \frac{1}{m'\beta'}\enormsq{e(\vct{b}_{\leq} - \mtx{A}_{\leq}\vct{x}_{j-1})}\right] \\
&= d(\vct{x}_{j-1}, S)^2 - p \cdot \frac{1}{\ \beta m} \sum_{i \in I_=} e(\mtx{A}_{=} \vct{x}_{j-1} - \vct{b}_=)_{i}^2\\
&\;\;- (1-p) \cdot \frac{1}{m'\beta'}\enormsq{e(\vct{b}_{\leq} - \mtx{A}_{\leq}\vct{x}_{j-1})}
\end{align*}

Since $p = \frac{\beta m}{\beta' m' + \beta m}$, we have $\frac{1-p}{\beta' m'} = \frac{1}{\beta' m'+\beta m}$ and we can simplify

\begin{align*}
\Expect \left[d(\vct{x}_j,S)^2\right] &\leq d(\vct{x}_{j-1}, S)^2 - \frac{1}{\beta' m' + \beta m} 
\Big[\sum_{i \in I_=} e(\mtx{A}_{=} \vct{x}_{j-1} - \vct{b}_=)_{i}^2 \\
&\;\;+  \enormsq{e(\vct{b}_{\leq} - \mtx{A}_{\leq}\vct{x}_{j-1})} \Big]  \\
&= d(\vct{x}_{j-1}, S)^2 - \frac{1}{\beta' m' + \beta m} \enormsq{e(\mtx{A} \vct{x}_{j-1} - \vct{b})} \\
&\leq d(\vct{x}_{j-1}, S)^2 - \frac{1}{L^2(\beta' m' + \beta m)} \cdot d(\vct{x}_{j-1},S)^2  \\
&= \left[1 - \frac{1}{L^2(\beta' m' + \beta m)}\right] d(\vct{x}_{j-1},S)^2,
\end{align*}
 where we have utilized the Hoffman bound~\eqref{hoffman} in the second inequality.

Iterating this relation along with independence of the random control completes the proof.

\end{proof}


\end{document}